\documentclass{amsart}

\usepackage{amssymb,enumitem,mathtools}
\setlist{itemsep=4pt,topsep=0pt,leftmargin=17pt,listparindent=11pt}
\usepackage{xcolor}
\definecolor{BIT}{cmyk}{1,0,1,0}
\usepackage[colorlinks,citecolor=BIT]{hyperref}
\usepackage{aliascnt}
\usepackage[capitalize]{cleveref}
\usepackage{booktabs,needspace,tikz,capt-of}
\usetikzlibrary{calc,positioning,arrows.meta,decorations.pathreplacing}
\numberwithin{figure}{section}
\newtheorem{theorem}{Theorem}[section]
\newtheorem{conjecture}[theorem]{Conjecture}
\newtheorem{lemma}[theorem]{Lemma}
\newtheorem{proposition}[theorem]{Proposition}
\newtheorem{corollary}[theorem]{Corollary}
\theoremstyle{definition}

\theoremstyle{remark}

\AddToHook{env/conjecture/begin}{\crefalias{theorem}{conjecture}}
\AddToHook{env/lemma/begin}{\crefalias{theorem}{lemma}}
\AddToHook{env/proposition/begin}{\crefalias{theorem}{proposition}}
\AddToHook{env/corollary/begin}{\crefalias{theorem}{corollary}}
\AddToHook{env/definition/begin}{\crefalias{theorem}{definition}}
\AddToHook{env/remark/begin}{\crefalias{theorem}{remark}}
\numberwithin{equation}{section}
\numberwithin{table}{section}
\allowdisplaybreaks
\usepackage[numbers,sort&compress,nonamebreak,merge,elide,longnamesfirst]{natbib}
\usepackage{etoolbox,hyphenat}
\makeatletter
\patchcmd{\NAT@citexnum}
{\@citea \NAT@test{\@ne}\NAT@spacechar\NAT@mbox{\NAT@super@kern\NAT@@open}}
{\@citea \nohyphens{\NAT@test{\@ne}}\nobreakspace{}\NAT@mbox{\NAT@super@kern\NAT@@open}}
{}{}
\makeatother
\BeforeBeginEnvironment{definition}{\Needspace{12\baselineskip}}

\DeclareMathOperator{\inc}{inc}

\DeclareMathOperator{\rk}{rk}

\newcommand{\ind}[1]{\mathbf1_{\{#1\}}}
\newcommand{\bchain}[1]{\mathbf{#1}}

\title[Schur positivity of the lattices \(\mathbf m\times\mathbf n\)]
{Schur positivity of the lattices \(\mathbf m\times\mathbf n\)}
\author[K.~Zhang]{Kai Zhang}
\address[Kai Zhang]{School of Mathematics and Statistics, Beijing Institute of Technology, Beijing 102400, P.\ R.\ China.}
\email{kai@bit.edu.cn}
\keywords{chromatic symmetric function, Schur positivity, distributive lattice, product of chains, chain partition}
\subjclass[2020]{05E05, 05A15, 06A07}
\date{October 6, 2026}
\hypersetup{
  pdftitle={Schur positivity of the lattices m x n},
  pdfauthor={Kai Zhang},
  pdfsubject={Schur positivity of the lattices m x n}
}

\begin{document}
\begin{abstract}
We prove that the product of chains $\bchain m\times\bchain n$ is not Schur positive whenever $n\ge4$ and $m\ge3n-1$. Writing $m=n+k$, we exhibit a negative Schur coefficient indexed by
$(2n+k-1,2n+k-3,\ldots,k+5,k,k,4)$
and evaluate it explicitly as $(n-2)!$ times a polynomial of degree three in $k$. Together with earlier results, this establishes the two-case
interpretation of the conjecture of Li, Qiu, Yang, and Zhang:
these products are not Schur positive when $n=2$ and $m\ge8$,
or when $n\ge3$ and $m\ge n+5$.
\end{abstract}
\maketitle

\section{Introduction}\label{sec:intro}

The chromatic polynomial, introduced by \citet{Bir12} in connection with the Four Color Problem, counts the proper colorings of a graph with a prescribed number of available colors. \citet{Sta95} extended this invariant to the \emph{chromatic symmetric function}. For a finite simple graph $G$, it is defined by
\[
X_G=\sum_{\kappa\colon V(G)\to\{1,2,\ldots\}}
x_1^{|\kappa^{-1}(1)|}x_2^{|\kappa^{-1}(2)|}\cdots,
\]
where the sum is over proper colorings: adjacent vertices receive distinct colors. The exponents record the sizes of the color classes. Since permuting the colors permutes the summands, $X_G$ is a symmetric function, homogeneous of degree $|V(G)|$.

Let $\Lambda$ denote the ring of symmetric functions. Given a basis $\{b_\lambda\}$ of $\Lambda$, a symmetric function is \emph{$b$-positive} if its expansion in this basis has only nonnegative coefficients. In particular, the Schur basis $\{s_\lambda\}$ and the elementary basis $\{e_\lambda\}$ give the notions of \emph{Schur positivity} and \emph{$e$-positivity}; see \citet{Mac95,Sta99}. Schur functions are the Frobenius characteristics of the irreducible representations of symmetric groups. Thus a homogeneous symmetric function with nonnegative integral Schur coefficients is the Frobenius characteristic of a representation, and its Schur coefficients record the irreducible multiplicities; see \citet{Sag01,Sta99}. Moreover, every $e$-positive symmetric function is Schur positive, since elementary symmetric functions are Schur functions indexed by columns and products of Schur functions are Schur positive. A graph $G$ is called $b$-positive when $X_G$ is $b$-positive.

The incomparability graph of a finite poset $P$, denoted by $\inc(P)$, joins incomparable elements. \citet{Sta95} reformulated a conjecture of \citet{SS93} as the assertion that $X_{\inc(P)}$ is $e$-positive whenever $P$ is $(3+1)$-free, that is, it contains no induced subposet consisting of a three-element chain and an element incomparable with all three. \citet{Gas96} proved the weaker assertion of Schur positivity. \citet{GP13} reduced the $e$-positivity conjecture to unit interval orders. \citet{Hik25} later proved the Stanley--Stembridge conjecture. The $e$-positivity result applies to complete graphs, paths, and lollipop graphs. The $e$-positivity of lollipop graphs was previously established by \citet{Dv18}.

The incomparability graph of every $(3+1)$-free poset is claw-free. The broader Stanley--Gasharov conjecture, recorded by \citet{Sta98} and studied by \citet{Gas99}, asserted that every claw-free graph is Schur positive. Here a claw is the graph $K_{1,3}$, and claw-free means that no induced subgraph is a claw. \citet{Sv25} proved Schur positivity for generalized nets. The conjecture was later disproved independently by \citet{Pra26} and \citet{MM26}, and \citet{WZZ26} constructed infinite families of counterexamples.

Other graph families admit explicit Schur-positivity classifications. \citet{WW20} determined the Schur-positive complete bipartite and tripartite graphs, and \citet{Sv26} extended the classification to complete multipartite graphs. A complete multipartite graph is the incomparability graph of a disjoint union of chains, whose lengths are its part sizes.

Several methods for studying chromatic symmetric functions have developed alongside these results. \citet{GS01} introduced a chromatic symmetric function in noncommuting variables. \citet{SW16} defined chromatic quasisymmetric functions, which retain an additional coloring statistic, and \citet{Paw22} obtained Schur-positivity criteria using the group algebra of the symmetric group. To prove non-Schur-positivity, we use the formula of \citet{WW20}, based on the inverse-Kostka interpretation of \citet{ER90}. The formula expresses each Schur coefficient as a signed sum of semiordered stable-partition counts; see \cref{thm:WW}. For an incomparability graph, its stable sets are the chains of the poset, so the formula becomes a signed enumeration of chain partitions.

Necessary conditions for Schur positivity can also be formulated in terms of stable partitions. A stable partition divides the vertex set into sets containing no adjacent vertices, and its type lists the block sizes in weakly decreasing order. Following \citet{Sta98}, a graph is \emph{nice} if its stable-partition types are closed downward in dominance order. Informally, the nice property requires a stable partition to exist for every more balanced type allowed by dominance. The \emph{strongly nice} property of \citet{LLYZ24} strengthens this requirement: the coefficients in the monomial basis $\{m_\lambda\}$ must not decrease as the index decreases in dominance order. The precise definitions and the dominance inequalities are recalled in Section~\ref{sec:preliminaries}. \citet{Sta98} proved that every Schur-positive graph is nice, and \citet{LLYZ24} strengthened this conclusion to the strongly nice property. Thus
\[
\text{Schur positive}\ \Longrightarrow\ \text{strongly nice}
\ \Longrightarrow\ \text{nice}.
\]
Neither converse holds in general: \citet{Zha26} constructed infinite families separating these properties.

These notions extend to finite posets through their incomparability graphs: a poset is Schur positive, strongly nice, or nice when its incomparability graph has the corresponding property. Since stable sets in $\inc(P)$ are chains in $P$, the nice property of a poset concerns the possible sizes of the chains in its chain partitions. This interpretation connects Schur positivity with questions about chain decompositions that predate the chromatic symmetric function.

For distributive lattices, the connection originates in a conjecture of \citet{Gri88} about chain partitions of Boolean algebras. Let $B_r$ be the poset of subsets of an $r$-element set ordered by inclusion. Griggs's conjecture is equivalent to the assertion that $B_r$ is nice. Motivated by this problem and the implication from Schur positivity to niceness, \citet[p.~270]{Sta98} asked whether $B_r$ might be Schur positive. Since Boolean algebras are distributive lattices, he also raised the broader question of whether every finite distributive lattice is Schur positive. We recall the order-theoretic definitions and Birkhoff's representation theorem in Section~\ref{sec:preliminaries}.

\citet{LQYZ25} answered the broader question negatively. They constructed distributive lattices that are not nice, and hence not Schur positive, as well as distributive lattices that are nice but not Schur positive. Their examples do not settle the separate question for Boolean algebras. To establish non-Schur-positivity for the nice examples, they used the Wang--Wang formula to obtain an explicit negative Schur coefficient.

Let $\bchain n$ denote a chain with $n$ elements. The products $\bchain m\times\bchain n$ are distributive lattices and are always nice by the chain-partition results of \citet{LE99}. Thus their non-Schur-positivity cannot be deduced from a failure of niceness. These products include the family of nice counterexamples studied by \citet{LQYZ25}, and they are the subject of this paper. Since interchanging the factors gives an isomorphic poset, we assume $m\ge n$. Li, Qiu, Yang, and Zhang proved the following result.

\begin{theorem}[Li, Qiu, Yang, and Zhang {\cite[Theorem~4.1]{LQYZ25}}]\label{thm:LQYZ}
For integers $k\ge5$ and $n\ge(k+2)/2$, the product
$\bchain{(n+k)}\times\bchain n$ is not Schur positive.
\end{theorem}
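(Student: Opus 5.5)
The plan is to exhibit a single negative Schur coefficient of $X_{\inc(P)}$ for $P=\bchain{(n+k)}\times\bchain n$, using the Wang--Wang formula (\cref{thm:WW}). That formula writes the coefficient of $s_\lambda$ as a signed sum, over permutations $\sigma$ (equivalently, over the compositions $\lambda+\delta-\sigma(\delta)$ arising from the inverse Kostka/Egecioglu--Remmel interpretation), of the number of ordered chain partitions of $P$ with prescribed block sizes. Stable sets of $\inc(P)$ are chains of $P$, and every chain of $\bchain{(n+k)}\times\bchain n$ has at most $2n+k-1$ elements. So I will choose $\lambda$ with long first parts close to this maximum. Near-maximal chains in a grid are very rigid: a chain of size $2n+k-1-j$ is a lattice path from $(1,1)$ to $(n+k,n)$ with $j$ points deleted.

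The key steps are as follows.
\begin{enumerate}
\item Take $\lambda$ to be a staircase-like partition with first parts $2n+k-1,\,2n+k-3,\,\ldots$. This matches the nested ``peeling'' structure of the grid: after removing a maximal chain, the next longest chain has two fewer elements, and so on.
\item End $\lambda$ with a few small parts chosen so that only a handful of compositions in the signed sum have nonzero counts. Since $\inc(P)$ is nice, this is where the sign must be forced.
\item Show that only the terms with $\sigma$ near the identity survive. Any composition with a first part exceeding $2n+k-1$, or too many long parts, gives zero chain partitions by the rigidity above.
\item Count each surviving term. A maximal chain is a monotone lattice path, and the successive long chains must nest. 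This yields product formulas in which the factor $(n-2)!$ comes from choices in the interior layers, together with polynomials in $k$ coming from the tail parts.
\end{enumerate}

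Existing conditions $k\ge5$ and $n\ge(k+2)/2$ suggest that the tail should be tuned so that the final signed total is negative exactly in that range. I expect the comparison to take the form $(n-2)!\,\bigl(a(k)-b(k)\,n\bigr)$, or something similar, with the sign turning negative when $2n\ge k+2$.

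The main obstacle is step 3 together with the enumeration in step 4. One must prove that chain partitions of the prescribed shapes decompose layer by layer, with no unexpected interleaving of long chains, and then evaluate the small residual counts exactly. To get the rigidity, I would first argue by induction on the number of long parts. The longest chain misses at most a bounded number of points of some maximal path, so removing it leaves a poset close to a smaller grid $\bchain{(n+k-1)}\times\bchain{(n-1)}$ with boundary defects. This makes it possible to recurse and reduce everything to counting placements of the $O(1)$ defective points and the short blocks. If the recursion becomes messy, a fallback is to verify the formula for small $n$ by computer and identify the polynomial in $k$ by interpolation before proving it.
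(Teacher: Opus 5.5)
Your outline has the right architecture. As the paper summarizes it, this is how Li, Qiu, Yang, and Zhang prove the theorem: take an index whose leading parts are the staircase $m+n-2i+1$ (with $m=n+k$), let the rank bound \eqref{eq:capacity} force the corresponding chains, apply the Wang--Wang formula, and count what is left. As a proof, however, the proposal has a genuine gap, because everything that makes the statement true is left open or guessed.

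\textbf{The index and the computation are missing.} You never fix $\lambda$. Choosing ``a few small parts so that only a handful of compositions survive'' is exactly the content of the theorem. The cited proof uses
\[
\rho'=(2n+k-1,2n+k-3,\ldots,k+3,k-3,2,2),
\]
that is, exactly $n-1$ staircase parts followed by the tail $(k-3,2,2)$. This is a partition precisely when $k\ge5$. For this index, the argument of \cref{lem:force} goes through: dominance of $\rho'$ and equality in \eqref{eq:capacity} for $r\le n-1$ force the first $n-1$ parts of every contributing content. They also force the corresponding chains to be saturated on ranks $i-1,\ldots,m+n-i-1$. Hence exactly one element is left at each rank $n-1,\ldots,m-1$. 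Only the six tabloids of the tail can contribute:
\begin{itemize}
\item sign $+$: tail contents $(k-3,2,2)$, $(k-2,3)$, $(k-1,1,1)$;
\item sign $-$: tail contents $(k-2,2,1)$, $(k-3,3,1)$, $(k-1,2)$.
\end{itemize}
The parts are to be sorted when $k=5$. The real work is then threefold: count the partitions of these $(k+1)$-element leftover posets into two or three chains, sum over all families of forced chains, and prove the signed total is negative. The hypothesis $n\ge(k+2)/2$ has to come out of that final sign analysis. In your proposal, the form $(n-2)!\,(a(k)-b(k)n)$ and the sign change at $2n=k+2$ are read off from the statement rather than derived. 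The interpolation fallback proves nothing for general $n$ unless you first do the structural analysis showing that the count has such a shape.

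\textbf{The structural picture for steps~3 and~4 would mislead you.} Removing a maximal chain from $[m]\times[n]$ does not in general leave a poset close to $\bchain{(m-1)}\times\bchain{(n-1)}$. Nor are the forced chains peeled off from the outside in: only their rank intervals are nested. In the middle band, where all of them are present, they can occur in every left-to-right order. This freedom is the source of the factorial, exactly as in Step~1 of the proof of \cref{prop:counts}. An attempt to prove ``no unexpected interleaving'' would therefore fail. What is needed instead is a parametrization of the forced families by a permutation of the $n-1$ chains, together with the data determining the leftover set.

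For the cited index this permutation gives the factor $(n-1)!$. The $(n-2)!$ you predict belongs to the paper's own index $\rho$, which forces only $n-2$ chains and leaves two elements per rank. For $\rho'$, the matching argument of Step~2 shows that the rows of the leftover elements weakly decrease as the rank increases. So the leftover poset is a disjoint union of pairwise incomparable horizontal runs. The required counts then reduce to distributing the tail blocks among at most three runs.
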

Writing $m=n+k$, their parameter range is equivalently
\begin{equation}\label{eq:old-range}
n\ge4,\qquad n+5\le m\le3n-2.
\end{equation}
They prove the theorem by exhibiting a negative Schur coefficient indexed by
\[
(2n+k-1,2n+k-3,\ldots,k+3,k-3,2,2).
\]
For this shape, the rank bound fixes the $n-1$ largest parts of every chain-partition type contributing to the Wang--Wang formula. After choosing chains of these lengths, they count the partitions of the remaining elements into two or three chains. Their theorem and further SageMath computations motivated the following conjecture, whose printed parameter conditions we reproduce.

\begin{conjecture}[Li, Qiu, Yang, and Zhang {\cite[Conjecture~5.1]{LQYZ25}}]\label{conj:LQYZ}
The product $\bchain m\times\bchain n$ is not Schur positive whenever
\[
(n\ge2\text{ and }m\ge8)
\qquad\text{or}\qquad
(n\ge3\text{ and }m\ge n+5).
\]
\end{conjecture}

In this paper we address the following two-case interpretation of the conjectured range:\footnote{The printed first condition contains the second and includes $m=n\ge8$, whereas \citet[Conjecture~5.2]{LQYZ25} predict Schur positivity for equal-factor products. Restricting the first condition to $n=2$ is consistent with the two-case formulation and the reported computations; see also \citet{WZ26}. Our conclusion concerns \eqref{eq:conjectured-range}, not the broader literal first condition.}
\begin{equation}\label{eq:conjectured-range}
 n=2,\ m\ge8\qquad\text{or}\qquad n\ge3,\ m\ge n+5.
\end{equation}
\Cref{thm:LQYZ} does not cover $n=2,3$ or $m\ge3n-1$. The computations of \citet{LQYZ25} give evidence beyond the theorem: they found non-Schur-positivity for $8\le m\le1199$ when $n=2$, for $8\le m\le100$ when $n=3$, and for $9\le m\le50$ when $n=4$.

\citet[Theorems~3.4 and~4.8]{WZ26} determined the exact thresholds for $n=2,3$. The product $\bchain m\times\bchain2$ is Schur positive exactly for $m\le7$, and $\bchain m\times\bchain3$ exactly for $m\le6$. For $m\ge8$, they obtain negative coefficients indexed, respectively, by
\[
(m-2,m-2,4)\qquad\text{and}\qquad(m+2,m-3,m-3,4).
\]
Their proofs use Pieri's rules and stable-composition counts to evaluate these coefficients uniformly in $m$. The case $7\times3$ is treated separately, improving the width-three threshold in \eqref{eq:conjectured-range} by one. Together with \cref{thm:LQYZ}, their results leave the range $n\ge4$, $m\ge3n-1$ in \eqref{eq:conjectured-range} to be established.

We prove non-Schur-positivity throughout this remaining range. More precisely, write $m=n+k$ and let
\[
\rho=(2n+k-1,2n+k-3,\ldots,k+5,k,k,4).
\]
The initial segment has $n-2$ parts, and $\rho$ is a partition of $n(n+k)$ of length $n+1$. Our main result gives
\[
[s_\rho]X_{\inc(\bchain{(n+k)}\times\bchain n)}
=\frac{(n-2)!}{360}P(n,k)<0
\qquad(n\ge4,\ k\ge2n-1),
\]
where $P(n,k)$ is a polynomial of degree three in $k$, given explicitly in \cref{thm:main}. Combining \cref{thm:main} with \cref{thm:LQYZ} and the results of \citet{WZ26} for $n=2,3$ proves non-Schur-positivity throughout \eqref{eq:conjectured-range}; see \cref{cor:range}.

The choice of $\rho$ extends the two coefficient indices used by \citet{WZ26}: the same pattern gives $(k,k,4)$ for $n=2$ and $(k+5,k,k,4)$ for $n=3$. For $n\ge4$, the rank bound and the special rim hook formula force the first $n-2$ chain lengths. Only six special rim hook tabloids can contribute, and the coefficient is a signed sum of the associated chain-partition counts. In contrast with the choice in \cref{thm:LQYZ}, one fewer long chain is fixed. The complements therefore contain two elements at each internal rank, and their partitions into three chains require a different enumeration.

We first parametrize the possible families of long chains and characterize their complements. The parametrization separates a permutation of the $n-2$ long chains from the data determining their complement, which accounts for the factor $(n-2)!$. In any contributing three-chain partition, the two larger chains have fixed row coordinates throughout a middle interval. The portions before and after this interval have bounded length, independently of $n$ and $k$, because the third chain has at most four elements. Counting these two boundary regions and the middle configurations gives formulas valid uniformly in both parameters. We then combine the six counts and prove that the resulting polynomial is negative for $n\ge4$ and $k\ge2n-1$.

Section~\ref{sec:preliminaries} recalls the notation for partitions, graphs, and posets, together with the rank bound and the Wang--Wang coefficient formula. Section~\ref{sec:main} determines the contributing tabloids, enumerates the corresponding chain partitions, and proves the main theorem. The detailed polynomial evaluations are given in Appendix~\ref{app:counts}.

\section{Preliminaries}\label{sec:preliminaries}

\subsection{Partitions and special rim hook tabloids}

We use the notation of \citet{LQYZ25}. For a positive integer $r$, let $[r]=\{1,\ldots,r\}$. A partition $\lambda=(\lambda_1,\ldots,\lambda_\ell)$ of $N$, written $\lambda\vdash N$, has positive, weakly decreasing parts with sum $N$. Put
\[
\lambda_{\langle j\rangle}=(\lambda_j,\ldots,\lambda_\ell).
\]
For partitions $\lambda$ and $\mu$ of the same integer, we write $\lambda\succeq\mu$ if $\lambda$ \emph{dominates} $\mu$, that is,
\[
\sum_{i=1}^j\lambda_i\ge\sum_{i=1}^j\mu_i\qquad(j\ge1),
\]
where missing parts are interpreted as zero. We use $m_\lambda$, $e_\lambda$, and $s_\lambda$ for the monomial, elementary, and Schur symmetric functions, respectively, and $[b_\lambda]f$ for the coefficient of $b_\lambda$ in the corresponding basis expansion of $f$; see \citet{Mac95,Sta99}.
We also write $\lambda=\langle1^{\alpha_1},2^{\alpha_2},\ldots\rangle$, where $\alpha_r$ is the number of parts of size $r$; zero multiplicities are omitted.

The \emph{Young diagram}, also called the \emph{Ferrers diagram}, of a
partition $\lambda$ consists of $\lambda_i$ left-justified cells in row $i$.
We use the French convention: rows are counted from bottom to top, so the
longest row is at the bottom. For example, the diagram of
$\lambda=(4,4,2,2,1)$ is shown in \cref{fig:prelim-ribbons}(a).
If $\mu_i\le\lambda_i$ for every $i$, with missing parts interpreted as
zero, the \emph{skew diagram} $\lambda/\mu$ is obtained by deleting the
diagram of $\mu$ from that of $\lambda$.

A \emph{ribbon} is a nonempty skew diagram that is connected through shared
cell edges and contains no $2\times2$ square. A \emph{rim hook} of a Young
diagram is a ribbon whose removal leaves a Young diagram. A \emph{rim hook
tabloid} of shape $\lambda$ is a decomposition of its diagram into ribbons
that can be removed successively as rim hooks of the remaining diagrams.
The tabloid records the decomposition, not a choice of removal order.
It is \emph{special} if every ribbon meets the first column;
a special rim hook tabloid is also called a \emph{special ribbon tabloid}.
The middle panel of \cref{fig:prelim-ribbons} is not special because its
five-cell ribbon misses the first column; the right-hand panel is special.

\begin{figure}[htbp]
\centering
\begin{tikzpicture}[x=.43cm,y=.43cm,font=\small,
  line cap=round,line join=round]
\foreach \offset in {0,10,20}{
  \begin{scope}[shift={(\offset,0)}]
    \foreach \x in {1,2,3}
      \draw[line width=.25pt] (\x,0)--(\x,2);
    \draw[line width=.25pt] (0,1)--(4,1);
    \draw[line width=.25pt] (1,2)--(1,5);
    \draw[line width=.25pt] (0,2)--(2,2);
    \draw[line width=.25pt] (0,3)--(2,3);
    \draw[line width=.25pt] (0,4)--(1,4);
    \draw[line width=.7pt]
      (0,0)--(4,0)--(4,2)--(2,2)--(2,4)--(1,4)--(1,5)--(0,5)--cycle;
  \end{scope}
}
\node[align=center,anchor=north] at (2,-.35) {(a) Young diagram};
\begin{scope}[shift={(10,0)}]
  \draw[line width=1.1pt] (1.5,3.5)--(1.5,1.5)--(3.5,1.5);
  \draw[line width=1.1pt] (.5,4.5)--(.5,3.5);
  \draw[line width=1.1pt] (.5,2.5)--(.5,1.5);
  \draw[line width=1.1pt] (.5,.5)--(3.5,.5);
  \foreach \x/\y in {.5/.5,3.5/.5,3.5/1.5,.5/2.5,
                       .5/3.5,.5/4.5,.5/1.5,1.5/3.5}
    \fill (\x,\y) circle[radius=1.1pt];
  \node[align=center,anchor=north] at (2,-.35) {(b) Rim hook tabloid\\(not special)};
\end{scope}
\begin{scope}[shift={(20,0)}]
  \draw[line width=1.1pt] (.5,4.5)--(.5,3.5)--(1.5,3.5);
  \draw[line width=1.1pt] (.5,2.5)--(1.5,2.5);
  \draw[line width=1.1pt] (.5,1.5)--(3.5,1.5)--(3.5,.5);
  \draw[line width=1.1pt] (.5,.5)--(2.5,.5);
  \foreach \x/\y in {.5/4.5,1.5/3.5,.5/2.5,1.5/2.5,
                       .5/1.5,3.5/.5,.5/.5,2.5/.5}
    \fill (\x,\y) circle[radius=1.1pt];
  \node[align=center,anchor=north] at (2,-.35) {(c) Special rim hook\\tabloid $T$};
\end{scope}
\end{tikzpicture}
\caption{The Young diagram of $(4,4,2,2,1)$ and two rim hook tabloids
of this shape, in the French convention. Thick paths join the cell centers
within individual ribbons; dots mark their endpoints.
Adapted from \citet{WZ26}.}
\label{fig:prelim-ribbons}
\end{figure}

Write $\mathcal T_\lambda$ for the set of special rim hook tabloids of shape
$\lambda$. For $T\in\mathcal T_\lambda$, put
$\operatorname{sh}(T)=\lambda$ and let $\operatorname{cont}(T)$ be the
partition obtained by arranging its ribbon sizes in weakly decreasing
order. The \emph{height} $\operatorname{ht}(B)$ of a ribbon $B$ is one less
than the number of rows it occupies, and
\[
\operatorname{ht}(T)=\sum_{B\in T}\operatorname{ht}(B).
\]
The sign of $T$ is $(-1)^{\operatorname{ht}(T)}$.
For example, the tabloid in \cref{fig:prelim-ribbons}(c) has
$\operatorname{cont}(T)=(5,3,3,2)$ and $\operatorname{ht}(T)=2$, hence sign $+1$.
We have $\operatorname{cont}(T)\succeq\operatorname{sh}(T)$ for every
$T\in\mathcal T_\lambda$; see \citet{ER90}.

\subsection{Graphs, posets, and chain partitions}

A subset of the vertex set of a graph $G$ is \emph{stable} if no two of its vertices are adjacent. A \emph{stable partition} of $G$ is a partition of $V(G)$ into stable sets, and its \emph{type} lists their sizes in weakly decreasing order. Following \citet{Sta98}, the graph $G$ is \emph{nice} if the existence of a stable partition of type $\lambda$ implies the existence of one of type $\mu$ whenever $\lambda\succeq\mu$. Equivalently,
\[
[m_\lambda]X_G>0\quad\Longrightarrow\quad[m_\mu]X_G>0
\qquad(\lambda\succeq\mu).
\]
It is \emph{strongly nice} if
\[
[m_\lambda]X_G\le[m_\mu]X_G\qquad(\lambda\succeq\mu);
\]
see \citet{LLYZ24}.

For a finite poset $P$, the \emph{incomparability graph} $\inc(P)$ has vertex set $P$ and an edge between each pair of incomparable elements. A \emph{chain} is a subset of pairwise comparable elements, and an \emph{antichain} is a subset of pairwise incomparable elements. A chain is \emph{saturated} if consecutive elements are related by a cover relation. The terms Schur positive, strongly nice, and nice for $P$ refer to the corresponding properties of $\inc(P)$. A poset is \emph{$(3+1)$-free} if it contains no induced subposet isomorphic to the disjoint union of a three-element chain and a singleton. A graph is \emph{claw-free} if it contains no induced copy of $K_{1,3}$.

A \emph{lattice} is a poset in which every pair of elements $x,y$ has a least upper bound $x\vee y$ and a greatest lower bound $x\wedge y$. It is \emph{distributive} if meet and join satisfy the distributive laws. By Birkhoff's representation theorem, every finite distributive lattice is isomorphic to the lattice of order ideals of a finite poset, with union and intersection as join and meet; see \citet{DP02}. An order ideal is a subset $I$ such that $y\in I$ and $x\le y$ imply $x\in I$. The Boolean algebra $B_r$ is the lattice of all subsets of $[r]$, ordered by inclusion.

A \emph{semiordered stable partition} of a graph $G$ is a partition of
$V(G)$ into stable sets, ordered by weakly decreasing size, with an order
also specified among sets of equal size. Its \emph{type} is the partition
of its block sizes. For example, for type $(5,3,3,2)$, interchanging the
two blocks of size $3$ gives a different semiordered partition; each
unordered stable partition of this type therefore gives two semiordered
ones.

A \emph{semiordered chain partition} of a finite poset $P$ is defined in the same way, with chains in place of stable sets. Write $\operatorname{SCP}_{P,\lambda}$ for the set of such partitions of type $\lambda$; these correspond to semiordered stable partitions of $\inc(P)$.

The product $P=\bchain m\times\bchain n$ is identified with $[m]\times[n]$, ordered by
\[
(x,y)\le(x',y')\quad\Longleftrightarrow\quad x\le x',\ y\le y'.
\]
It is a distributive lattice, with meet and join given by coordinatewise minimum and maximum. Assume $m\ge n$. Its rank function is $\rk(x,y)=x+y-2$. If $P(q)$ denotes its rank-$q$ set and $a_q=|P(q)|$, then
\begin{equation}\label{eq:ranks}
(a_0,a_1,\ldots,a_{m+n-2})=
\left(1,2,\ldots,n-1,\underbrace{n,\ldots,n}_{m-n+1\text{ ranks}},n-1,\ldots,2,1\right).
\end{equation}
In Hasse diagrams, elements of the same rank are read from left to right in increasing order of $x$ and decreasing order of $y$.

We use the following rank bound to control the long chains. Since a
chain meets each rank at most once, pairwise disjoint chains
$C_1,\ldots,C_r$, with $1\le r\le n$, satisfy
\begin{equation}\label{eq:capacity}
\sum_{i=1}^r|C_i|\le\sum_q\min(r,a_q)=r(m+n-r).
\end{equation}
Suppose that the pairwise disjoint chains $C_1,\ldots,C_t$, with
$1\le t\le n$, satisfy $|C_i|=m+n-2i+1$ for $1\le i\le t$.
For every $1\le r\le t$, the first $r$ chains attain the bound in
\eqref{eq:capacity}; since their contribution at rank $q$ is at most
$\min(r,a_q)$, equality holds at every rank. Subtracting consecutive
prefixes shows that $C_i$ has one vertex at each rank
$i-1,\ldots,m+n-i-1$ and none elsewhere, so it is saturated.
This is the rank argument used in
\citet[Proposition~4.3]{LQYZ25}.

\subsection{The Schur coefficient formula}

The following formula expresses a Schur coefficient as a signed count of
semiordered stable partitions.
\begin{theorem}[Wang and Wang {\cite[Theorem~3.1]{WW20}}]\label{thm:WW}
For a finite simple graph $G$ and $\lambda\vdash|V(G)|$,
\begin{equation}\label{eq:WW-graph}
[s_\lambda]X_G
=\sum_{T\in\mathcal T_\lambda}(-1)^{\operatorname{ht}(T)}N_T,
\end{equation}
where $N_T$ is the number of semiordered stable partitions of $G$ of type $\operatorname{cont}(T)$.
\end{theorem}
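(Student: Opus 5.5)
The plan is to expand $X_G$ in the monomial basis, whose coefficients are exactly semiordered stable-partition counts, and then change basis from monomials to Schur functions. The entries of that change-of-basis matrix are the entries of the inverse Kostka matrix, and \citet{ER90} interpret these combinatorially as signed counts of special rim hook tabloids.

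\textbf{Monomial expansion.} Fix a partition $\mu=(\mu_1,\ldots,\mu_\ell)\vdash|V(G)|$. By symmetry of $X_G$, the coefficient $[m_\mu]X_G$ equals the coefficient of $x_1^{\mu_1}\cdots x_\ell^{\mu_\ell}$. That coefficient counts the proper colorings $\kappa$ with $|\kappa^{-1}(i)|=\mu_i$ for $1\le i\le\ell$ and no other colors used.
\begin{itemize}
\item Such a coloring determines the sequence of stable sets $(\kappa^{-1}(1),\ldots,\kappa^{-1}(\ell))$.
\item This sequence lists the blocks of a stable partition of type $\mu$ in weakly decreasing order of size, together with an order among blocks of equal size.
\item Conversely, every semiordered stable partition of type $\mu$ arises from exactly one such coloring.
\end{itemize}
Hence $[m_\mu]X_G=N_\mu$, where $N_\mu$ is the number of semiordered stable partitions of type $\mu$, and
\[
X_G=\sum_{\mu\vdash|V(G)|}N_\mu\,m_\mu .
\]
The only point needing care is that the semiordering exactly matches the ordering of equal-size color classes by color index. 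Equivalently, the $\prod_r\alpha_r!$ factor relates $m_\mu$ to the augmented monomial function.

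\textbf{Change of basis.} By definition of the Kostka numbers, $s_\lambda=\sum_\mu K_{\lambda\mu}m_\mu$. The matrix $K$ is unitriangular with respect to dominance, so it is invertible over $\mathbb Z$, and
\[
m_\mu=\sum_\lambda (K^{-1})_{\mu\lambda}\,s_\lambda .
\]
Therefore
\[
[s_\lambda]X_G=\sum_\mu N_\mu\,(K^{-1})_{\mu\lambda}.
\]

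\textbf{Inverse Kostka numbers.} The key input is the theorem of \citet{ER90}:
\[
(K^{-1})_{\mu\lambda}=\sum_{\substack{T\in\mathcal T_\lambda\\ \operatorname{cont}(T)=\mu}}(-1)^{\operatorname{ht}(T)} .
\]
I would take this as known. Substituting it into the previous display and interchanging the order of summation, each $T\in\mathcal T_\lambda$ appears exactly once, namely with $\mu=\operatorname{cont}(T)$. This yields \eqref{eq:WW-graph}.

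\textbf{Main obstacle.} The substantive step is the Eğecioğlu–Remmel identity. If it had to be proved rather than quoted, I would argue as follows.
\begin{itemize}
\item Show that the signed tabloid counts satisfy the same recursion as $K^{-1}$. Concretely, verify $\sum_\lambda (\text{signed count})_{\mu\lambda}K_{\lambda\nu}=\delta_{\mu\nu}$.
\item Do this with a sign-reversing involution on pairs consisting of a special rim hook tabloid and a semistandard tableau of the same shape.
\item The natural starting point is the Jacobi--Trudi expansion $s_\lambda=\det(h_{\lambda_i-i+j})$. Special rim hooks are exactly the terms surviving when that determinant is expanded column-first.
\end{itemize}
Everything else is routine bookkeeping of indices and signs.
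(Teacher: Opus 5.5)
Your proposal is correct and follows the same route as the cited proof of Wang and Wang, which the paper describes as based on the inverse-Kostka interpretation of E\u{g}ecio\u{g}lu and Remmel. You identify $[m_\mu]X_G$ with the number of semiordered stable partitions of type $\mu$, and then use $(K^{-1})_{\mu\lambda}=\sum_{T\in\mathcal T_\lambda,\ \operatorname{cont}(T)=\mu}(-1)^{\operatorname{ht}(T)}$, with the shape/content indexing correctly oriented, so the two arguments agree.
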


Since stable sets of $\inc(P)$ are precisely chains of $P$,
\cref{thm:WW} gives the following poset version.
\begin{corollary}[Li, Qiu, Yang, and Zhang {\cite[Corollary~2.3]{LQYZ25}}]\label{cor:WW-poset}
For a finite poset $P$ and a partition $\lambda\vdash|P|$,
\begin{equation}\label{eq:WW}
[s_\lambda]X_{\inc(P)}
=\sum_{T\in\mathcal T_\lambda}(-1)^{\operatorname{ht}(T)}
\bigl|\operatorname{SCP}_{P,\operatorname{cont}(T)}\bigr|.
\end{equation}
\end{corollary}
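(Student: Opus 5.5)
The plan is to specialize \cref{thm:WW} to the graph $G=\inc(P)$ and match the counts term by term. No new combinatorics is needed: the only point to check is that the quantity $N_T$ in \eqref{eq:WW-graph} equals $\bigl|\operatorname{SCP}_{P,\operatorname{cont}(T)}\bigr|$ for every special rim hook tabloid $T$.

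First, I would note that $V(\inc(P))=P$, so $\lambda\vdash|P|$ is exactly the hypothesis $\lambda\vdash|V(G)|$ required in \cref{thm:WW}. Second, I would verify that a subset $S\subseteq P$ is stable in $\inc(P)$ if and only if it is a chain in $P$. Indeed, $S$ is stable precisely when no two of its elements are adjacent in $\inc(P)$. By the definition of $\inc(P)$, this means that no two elements of $S$ are incomparable in $P$, that is, $S$ is a chain.

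Consequently, a partition of $V(G)=P$ into stable sets is the same object as a partition of $P$ into chains, and the two have the same block sizes. The extra data making a partition semiordered are identical in both settings: the blocks are listed in weakly decreasing order of size, and an order is specified among blocks of equal size. Hence, for every partition $\mu$, the identity map is a bijection between the semiordered stable partitions of $\inc(P)$ of type $\mu$ and the set $\operatorname{SCP}_{P,\mu}$.

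Taking $\mu=\operatorname{cont}(T)$ gives $N_T=\bigl|\operatorname{SCP}_{P,\operatorname{cont}(T)}\bigr|$ for each $T\in\mathcal T_\lambda$. Substituting this into \eqref{eq:WW-graph} yields \eqref{eq:WW}. There is no genuine obstacle; the only care needed is to confirm that the semiordering conventions coincide, so that no factor such as a multiplicity of equal parts is introduced or lost in the translation.
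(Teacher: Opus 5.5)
Your proposal is correct and matches the paper's argument. The paper also obtains \cref{cor:WW-poset} directly from \cref{thm:WW} by observing that stable sets of $\inc(P)$ are exactly the chains of $P$, so the semiordered stable partitions of $\inc(P)$ of type $\operatorname{cont}(T)$ are exactly the elements of $\operatorname{SCP}_{P,\operatorname{cont}(T)}$; your explicit check that the semiordering conventions agree is a detail the paper leaves implicit.
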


\section{A negative Schur coefficient for products of two chains}\label{sec:main}

Throughout this section, let $m=n+k$, where $n\ge4$ and $k\ge2n-1$. We prove the following theorem.

\begin{theorem}\label{thm:main}
The product $\bchain{(n+k)}\times\bchain n$ is not Schur positive for $n\ge4$ and $k\ge2n-1$. More precisely, for
\begin{equation}\label{eq:rho}
\rho=(2n+k-1,2n+k-3,\ldots,k+5,k,k,4),
\end{equation}
we have
\begin{equation}\label{eq:main-coeff}
[s_{\rho}]X_{\inc(\bchain{(n+k)}\times\bchain n)}
=\frac{(n-2)!}{360}P(n,k)<0,
\end{equation}
where
\begin{equation}\label{eq:P}
\begin{aligned}
P(n,k)={}&(-20n^3+20n)k^3+(15n^4+30n^3-195n^2+510n-360)k^2
+(-6n^5-15n^4-100n^3\\
&\quad +195n^2+2446n-3240)k+n^6+3n^5+25n^4+45n^3+4294n^2-15168n+12240.
\end{aligned}
\end{equation}
\end{theorem}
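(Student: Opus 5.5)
We will compute the coefficient with \cref{cor:WW-poset}, applied to $P=\bchain{(n+k)}\times\bchain n$ and $\lambda=\rho$. Then we check the sign of the resulting polynomial. The first step enumerates $\mathcal T_\rho$, or at least those $T$ with $|\operatorname{SCP}_{P,\operatorname{cont}(T)}|\ne0$. Every special ribbon meets the first column, so a special rim hook tabloid of a shape of length $n+1$ has at most $n+1$ ribbons. By \eqref{eq:capacity}, a chain partition of $P$ with first parts $\mu_1,\dots,\mu_r$ satisfies $\mu_1+\dots+\mu_r\le r(2n+k-r)$. Together with $\operatorname{cont}(T)\succeq\rho$, this forces the first $n-2$ parts of $\operatorname{cont}(T)$ to equal $\rho_1,\dots,\rho_{n-2}=2n+k-1,\dots,k+5$. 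Inductively, the ribbon through row $i$ must be the horizontal row $i$ itself. We are then left with special ribbon tabloids of the small shape $(k,k,4)$ attached at the top. Listing them by hand should give exactly six tabloids, with contents such as $(k,k,4)$, $(k+1,k-1,4)$, $(2k,4)$, $(k+4,k)$ and $(k+3,k+1)$ or similar, with one- and two-ribbon variants, and their signs. I will write down these six contents $\nu^{(1)},\dots,\nu^{(6)}$ explicitly, each prefixed by the fixed long parts.

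The second step is structural. By the saturation argument after \eqref{eq:capacity}, the long chains $C_1,\dots,C_{n-2}$ are saturated, and $C_i$ occupies exactly ranks $i-1,\dots,m+n-i-1$. The complement $R$ of their union therefore has $\min(a_q,2)$ elements at each rank in the middle range and a few extra elements near ranks $n-3,\dots$ at both ends. I will parametrize families of nested saturated chains by lattice paths, separating a labelling permutation (giving $(n-2)!$) from the unlabelled family. Then I will characterize which $2$-rows-per-rank complements $R$ arise. Because semiordered partitions label equal-size blocks, the $(n-2)!$ factor should come out uniformly, and equal tail parts such as $k,k$ get a factor $2$.

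The third step, which I expect to be the main obstacle, is to count the partitions of $R$ into the two or three chains of the prescribed tail type and to sum over families. The key observation is that the third chain has at most $4$ elements. So outside a bounded window at each end, the two long tail chains must take the two available elements at each rank, and their row coordinates are frozen through the middle interval. I will therefore decompose each count into a left boundary configuration, a middle segment, and a right boundary configuration. The boundary configurations range over a finite set independent of $n$ and $k$. The middle contributes a number of choices polynomial in $n$ and $k$, coming from where the short chain and the crossover points sit, together with the freedom in the long-chain family. Summing gives each $|\operatorname{SCP}_{P,\nu^{(j)}}|$ as $(n-2)!$ times an explicit polynomial, valid once $k\ge2n-1$ so that the two boundary windows do not interact. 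I will verify these counts against small cases, for example $n=4,5$ and small $k$, by computer before trusting them.

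Finally, I will form the signed sum and simplify to $\frac{(n-2)!}{360}P(n,k)$. For negativity, substitute $k=2n-1+t$ with $t\ge0$ and $n=4+s$ with $s\ge0$. The leading coefficient $-20n(n^2-1)k^3$ dominates, so I will show that the expanded polynomial in $(s,t)$ has all coefficients negative, or else bound the few positive terms by negative neighbours. This gives $P(n,k)<0$ for all $n\ge4$ and $k\ge2n-1$.
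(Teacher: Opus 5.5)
Your architecture matches the paper's. It uses the Wang--Wang formula, the rank bound forcing the first $n-2$ rows of $T$ to be horizontal hooks, an $(n-2)!$ factor from the long chains, and a middle/boundary decomposition of the three-chain partitions of the complement. The sign step is the same substitution $n=4+u$, $k=2n-1+t$; every coefficient then has the same sign, so no bounding is needed. But the theorem is an exact identity, and several of your concrete claims would derail it.

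\emph{The tail list is wrong.} A ribbon containing the first-column cells of rows $i$ and $i+1$ must contain both rows entirely, since its removal must leave a partition. It therefore contains a $2\times2$ block, because every row of $(k,k,4)$ and of the possible two-row remainders $(k,k)$, $(k,3)$, $(k-1,3)$ has length at least $3$. So all six tabloids have three ribbons. The top ribbon is row three, or row three together with $(2,4),\dots,(2,k)$, or this together with $(1,k)$. The contents are $(k,k,4)$, $(k+1,k-1,4)$, $(k+2,k,2)$, $(k+1,k,3)$, $(k+1,k+1,2)$, $(k+2,k-1,3)$, with signs $+,-,-,-,+,+$. Contents such as $(2k,4)$, $(k+4,k)$, $(k+3,k+1)$ never occur.

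\emph{The source of $(n-2)!$ is misidentified.} It does not come from semiordering, since the long chains have distinct sizes $m+n-2i+1$. On the common ranks $n-3,\dots,m+1$ the long chains form $n-2$ non-crossing paths $Q_1,\dots,Q_{n-2}$. Every assignment of $C_{\sigma_i}$ to $Q_i$ extends uniquely through the end ranks, where every element is used and each active chain shifts by at most one position per rank. This extension lemma is the real content of the factor and must be proved. Also, the complement has exactly one element at ranks $n-2$ and $m$ and two at each rank strictly between; there is nothing near rank $n-3$.

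\emph{The counting step is thinnest and partly wrong.} The boundaries have bounded length (at most four internal levels in total), but their configurations are not a finite set independent of $n$ and $k$. They depend on the gap $d=b-a$ between the two frozen rows and on the row of the last short-chain element, which ranges up to $n-a$. The paper sums over $a$ and $d$ to get $\sum_{d=1}^{n-1}(n-d)B_j(n,k,d)$. The generic polynomials $B_j$ agree with the actual gap-$d$ contributions only for $d\ge4$, and explicit corrections are needed at $d=1,2,3$; your plan anticipates neither. The short chain can also occupy middle levels, first in row $a$ and then in row $b$, with consecutive occurrences at least $d$ levels apart across the switch. So it is false that the two long tail chains fill both elements outside bounded end windows. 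Conversely, $A$ and $B$ never change order at the levels they share, so there are no crossovers to count.

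\emph{The key lemma is missing.} Long-chain families correspond bijectively to complement row data satisfying the interlacing $p_{t+1}\le p_t<q_{t+1}\le q_t$. This turns the sum over families into a sum over labelled row data. It also fixes the rows between the first and last $AB$-levels: interlacing makes them weakly decrease while the chain condition makes them weakly increase. Your ``frozen rows'' is asserted without this mechanism.
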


The initial segment of $\rho$ has $n-2$ parts, and its tail is $\rho_{\langle n-1\rangle}=(k,k,4)$. Its parts sum to $n(n+k)$. As in \citet[Lemma~4.2]{LQYZ25}, we first determine which special rim hook tabloids can contribute to \eqref{eq:WW}.

\begin{lemma}\label{lem:force}
Let $T\in\mathcal T_\rho$ have content $\lambda$, and suppose that $\operatorname{SCP}_{\bchain m\times\bchain n,\lambda}\ne\varnothing$. Then
\[
\lambda_i=\rho_i=m+n-2i+1\qquad(1\le i\le n-2),
\]
and the first $n-2$ rows of $T$ are single horizontal rim hooks.
\end{lemma}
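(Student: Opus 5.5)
Two inequalities pinch each prefix sum: dominance of $\operatorname{cont}(T)$ over $\rho$ from below, and the rank bound \eqref{eq:capacity} from above. Then an induction on the rows of $T$ shows that each of the first $n-2$ rows is a single ribbon.

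\emph{Prefix sums.} Let $\lambda=\operatorname{cont}(T)$. Since $\operatorname{cont}(T)\succeq\operatorname{sh}(T)=\rho$, for every $r$ we have
\[
\sum_{i\le r}\lambda_i\ge\sum_{i\le r}\rho_i .
\]
Now take a semiordered chain partition of $\bchain m\times\bchain n$ of type $\lambda$. Its $r$ largest chains are pairwise disjoint, so \eqref{eq:capacity} gives $\sum_{i\le r}\lambda_i\le r(m+n-r)$ for $1\le r\le n$. For $r\le n-2$ the parts of $\rho$ satisfy $\rho_i=2n+k-2i+1=m+n-2i+1$, so
\[
\sum_{i\le r}\rho_i=\sum_{i=1}^r(m+n-2i+1)=r(m+n-r).
\]
Hence $\sum_{i\le r}\lambda_i=\sum_{i\le r}\rho_i$ for every $r\le n-2$. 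Subtracting consecutive prefixes gives $\lambda_i=\rho_i$ for $i\le n-2$.

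\emph{Shape of the first rows.} I will induct on $i=1,\ldots,n-2$, assuming rows $1,\ldots,i-1$ of $T$ are single horizontal ribbons; these account exactly for the parts $\rho_1,\ldots,\rho_{i-1}$. Every ribbon of a special tabloid meets the first column, and each cell of the first column lies in exactly one ribbon, so ribbons correspond to first-column cells. Let $B$ be the ribbon containing the cell $(i,1)$. By the induction hypothesis, $B$ stays in rows $\ge i$, and rows $\ge i$ of the diagram form the shape $\rho_{\langle i\rangle}$.

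\emph{Bounding the largest remaining ribbon.} I claim every ribbon in the shape $\rho_{\langle i\rangle}$ has size at most $\rho_i$, with equality only for the horizontal row $i$. A ribbon contains at most one cell on each diagonal $x-y=$ const. A ribbon starting in row $i$, column $1$ and going up meets diagonals in the range from $1-i$ down to that of its last cell. The set of diagonals of the shape $\rho_{\langle i\rangle}$ lying weakly right of column $1$ and weakly above row $i$ has size at most $\rho_i$, because the rows strictly decrease by $2$ until the tail $(k,k,4)$. More simply: a ribbon's size equals its number of occupied diagonals, and its top-left start cell lies in column $1$. Its end cell in row $j\ge i$ has column at most $\rho_j\le\rho_i-2(j-i)$ for $j\le n-2$, and at most $\rho_i$ in the tail rows as well. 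This gives size $\le \rho_j-(j-i)\cdot1$, which I will need to make precise.

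\emph{Concluding.} Once every ribbon in the remaining shape has size at most $\rho_i$, consider the multiset of contents: the parts $\lambda_i,\ldots,\lambda_{n-2}$, each equal to $\rho_i,\ldots,\rho_{n-2}$, must be realized by ribbons of rows $\ge i$. The part equal to $\rho_i$ needs a ribbon of size exactly $\rho_i$. An equality analysis shows such a ribbon must be row $i$ itself: a ribbon occupying rows $i..j$ with $j>i$ ends in column $\le\rho_j<\rho_i-(j-i)$, so its size $\rho_j+(j-i)$ is less than $\rho_i$. The ribbon starting at $(i,1)$ must be this one, since any other ribbon meeting column 1 starts higher. So row $i$ is a single horizontal rim hook.

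\emph{Main obstacle.} The hardest step is the uniform size bound for ribbons through the tail rows $(k,k,4)$. A ribbon starting in row $i\le n-2$ and reaching into rows $n-1,n,n+1$ has size about $k+(n+1-i)$. I must check that this is less than $\rho_i=k+2(n-i)+1$, which holds since $n+1-i<2(n-i)+1$ for $i\le n-1$. I would also handle the constraint that $B$ must be removable as a rim hook, but size alone should suffice here.
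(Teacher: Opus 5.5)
Your prefix-sum argument is the paper's and is correct, but the row assertion has a genuine gap. The claim that every ribbon in the shape $\rho_{\langle i\rangle}$ has at most $\rho_i$ cells, with equality only for row $i$, is false. In the French convention a ribbon's rightmost cell lies in its lowest row. So a ribbon with lowest row $i$ and highest row $j$ can have up to $\rho_i+(j-i)$ cells; the outer rim of $\rho_{\langle i\rangle}$ has $\rho_i+n+1-i$. Your bound ``ends in column $\le\rho_j$'' constrains the wrong end of the ribbon.

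For a concrete failure of your equality analysis, write cells as (row, column). The hook $\{(i,1),\ldots,(i,\rho_i-1),(i+1,1)\}$ is a ribbon with exactly $\rho_i$ cells. It contains $(i,1)$ and meets the first column, yet it is not row $i$. Hence neither the size of $B$ nor the existence of a part equal to $\rho_i$ forces $B$ to be row $i$. For the same reason, your estimate $k+(n+1-i)$ for ribbons reaching the tail rows is too small. Also, ribbons do not correspond bijectively to first-column cells, since a special ribbon may contain several of them.

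The missing idea is to take the upper bound from the content and the lower bound from the rightmost cell of row $i$, not from the cell $(i,1)$. By your first step and the induction hypothesis, the ribbons in rows $\ge i$ have sizes $\lambda_i,\lambda_{i+1},\ldots$. Each therefore has at most $\lambda_i=\rho_i$ cells, and no geometric bound is needed.

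Now consider the ribbon containing $(i,\rho_i)$. It lies in rows $\ge i$ and, since $T$ is special, meets the first column. By connectedness it has a cell in each of the columns $1,\ldots,\rho_i$, hence at least $\rho_i$ cells. Equality holds only if it has one cell per column, i.e.\ no vertical step, so it is exactly row $i$. This is the paper's argument. It also excludes the hook above, since $(i,\rho_i)$ would then have to lie in a ribbon reaching the first column through a higher row.
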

\begin{proof}
By $\lambda\succeq\rho$ and \eqref{eq:capacity},
\[
\sum_{j=1}^i\rho_j\le\sum_{j=1}^i\lambda_j
\le i(m+n-i)=\sum_{j=1}^i\rho_j
\qquad(1\le i\le n-2).
\]
Thus the prefix sums are equal, and subtraction gives $\lambda_i=\rho_i$.
For the row assertion, proceed by induction on $i$. Suppose that the first
$i-1$ rows are horizontal hooks. These account for the $i-1$ largest
parts of $\lambda$, so every other hook has size at most $\lambda_i=\rho_i$.
The hook containing the rightmost
cell of row $i$ meets the first column, so connectedness forces it to
have at least $\rho_i$ cells, with equality only when it is precisely
row $i$. This completes the induction.
\end{proof}

By \cref{lem:force}, only the three-row tail $(k,k,4)$ remains.
It has the six special rim hook tabloids shown in \cref{fig:six-ribbons}:
each of the three choices for the top ribbon leaves a two-row shape
with two special rim hook tabloids.
Let $T_j$ be the full tabloid obtained by adjoining the forced horizontal
rows to the $j$th tail, and let $\mu_j$ be its tail content, as labeled
in \cref{fig:six-ribbons}. Since every tail hook has at most $k+2<\rho_{n-2}=k+5$
cells, $\operatorname{cont}(T_j)_{\langle n-1\rangle}=\mu_j$.

\begin{figure}[htbp]\centering
\begin{tikzpicture}[x=.34cm,y=.34cm,font=\small,
  line cap=round,line join=round]
\begin{scope}[shift={(0,0)}]
\foreach \x in {1,...,7}
  \draw[black,line width=.35pt] (\x,0)--(\x,2);
\foreach \x in {1,2,3}
  \draw[black,line width=.35pt] (\x,2)--(\x,3);
\draw[black,line width=.35pt] (0,1)--(8,1) (0,2)--(4,2);
\draw[black,line width=.65pt] (0,0)--(8,0)--(8,2)--(4,2)--(4,3)--(0,3)--cycle;
\draw[black,line width=.65pt,rounded corners=1.5pt] (0.5,2.5)--(3.5,2.5);
\draw[black,fill=white,line width=.45pt] (0.5,2.5) circle (.065);
\draw[black,fill=white,line width=.45pt] (3.5,2.5) circle (.065);
\draw[black,line width=.65pt,rounded corners=1.5pt] (0.5,1.5)--(7.5,1.5);
\draw[black,fill=white,line width=.45pt] (0.5,1.5) circle (.065);
\draw[black,fill=white,line width=.45pt] (7.5,1.5) circle (.065);
\draw[black,line width=.65pt,rounded corners=1.5pt] (0.5,0.5)--(7.5,0.5);
\draw[black,fill=white,line width=.45pt] (0.5,0.5) circle (.065);
\draw[black,fill=white,line width=.45pt] (7.5,0.5) circle (.065);
\node[anchor=east] at (-.6,1.5) {$T_1$};
\node[anchor=north,font=\footnotesize] at (4,-.4) {$\mu_1=(k,k,4)$};

\end{scope}
\begin{scope}[shift={(12.6,0)}]
\foreach \x in {1,...,7}
  \draw[black,line width=.35pt] (\x,0)--(\x,2);
\foreach \x in {1,2,3}
  \draw[black,line width=.35pt] (\x,2)--(\x,3);
\draw[black,line width=.35pt] (0,1)--(8,1) (0,2)--(4,2);
\draw[black,line width=.65pt] (0,0)--(8,0)--(8,2)--(4,2)--(4,3)--(0,3)--cycle;
\draw[black,line width=.65pt,rounded corners=1.5pt] (0.5,2.5)--(3.5,2.5);
\draw[black,fill=white,line width=.45pt] (0.5,2.5) circle (.065);
\draw[black,fill=white,line width=.45pt] (3.5,2.5) circle (.065);
\draw[black,line width=.65pt,rounded corners=1.5pt] (0.5,1.5)--(7.5,1.5)--(7.5,0.5);
\draw[black,fill=white,line width=.45pt] (0.5,1.5) circle (.065);
\draw[black,fill=white,line width=.45pt] (7.5,0.5) circle (.065);
\draw[black,line width=.65pt,rounded corners=1.5pt] (0.5,0.5)--(6.5,0.5);
\draw[black,fill=white,line width=.45pt] (0.5,0.5) circle (.065);
\draw[black,fill=white,line width=.45pt] (6.5,0.5) circle (.065);
\node[anchor=east] at (-.6,1.5) {$T_2$};
\node[anchor=north,font=\footnotesize] at (4,-.4) {$\mu_2=(k+1,k-1,4)$};

\end{scope}
\begin{scope}[shift={(25.2,0)}]
\foreach \x in {1,...,7}
  \draw[black,line width=.35pt] (\x,0)--(\x,2);
\foreach \x in {1,2,3}
  \draw[black,line width=.35pt] (\x,2)--(\x,3);
\draw[black,line width=.35pt] (0,1)--(8,1) (0,2)--(4,2);
\draw[black,line width=.65pt] (0,0)--(8,0)--(8,2)--(4,2)--(4,3)--(0,3)--cycle;
\draw[black,line width=.65pt,rounded corners=1.5pt] (0.5,2.5)--(3.5,2.5)--(3.5,1.5)--(7.5,1.5)--(7.5,0.5);
\draw[black,fill=white,line width=.45pt] (0.5,2.5) circle (.065);
\draw[black,fill=white,line width=.45pt] (7.5,0.5) circle (.065);
\draw[black,line width=.65pt,rounded corners=1.5pt] (0.5,1.5)--(2.5,1.5)--(2.5,0.5)--(6.5,0.5);
\draw[black,fill=white,line width=.45pt] (0.5,1.5) circle (.065);
\draw[black,fill=white,line width=.45pt] (6.5,0.5) circle (.065);
\draw[black,line width=.65pt,rounded corners=1.5pt] (0.5,0.5)--(1.5,0.5);
\draw[black,fill=white,line width=.45pt] (0.5,0.5) circle (.065);
\draw[black,fill=white,line width=.45pt] (1.5,0.5) circle (.065);
\node[anchor=east] at (-.6,1.5) {$T_3$};
\node[anchor=north,font=\footnotesize] at (4,-.4) {$\mu_3=(k+2,k,2)$};

\end{scope}
\begin{scope}[shift={(0,-5.8)}]
\foreach \x in {1,...,7}
  \draw[black,line width=.35pt] (\x,0)--(\x,2);
\foreach \x in {1,2,3}
  \draw[black,line width=.35pt] (\x,2)--(\x,3);
\draw[black,line width=.35pt] (0,1)--(8,1) (0,2)--(4,2);
\draw[black,line width=.65pt] (0,0)--(8,0)--(8,2)--(4,2)--(4,3)--(0,3)--cycle;
\draw[black,line width=.65pt,rounded corners=1.5pt] (0.5,2.5)--(3.5,2.5)--(3.5,1.5)--(7.5,1.5);
\draw[black,fill=white,line width=.45pt] (0.5,2.5) circle (.065);
\draw[black,fill=white,line width=.45pt] (7.5,1.5) circle (.065);
\draw[black,line width=.65pt,rounded corners=1.5pt] (0.5,1.5)--(2.5,1.5);
\draw[black,fill=white,line width=.45pt] (0.5,1.5) circle (.065);
\draw[black,fill=white,line width=.45pt] (2.5,1.5) circle (.065);
\draw[black,line width=.65pt,rounded corners=1.5pt] (0.5,0.5)--(7.5,0.5);
\draw[black,fill=white,line width=.45pt] (0.5,0.5) circle (.065);
\draw[black,fill=white,line width=.45pt] (7.5,0.5) circle (.065);
\node[anchor=east] at (-.6,1.5) {$T_4$};
\node[anchor=north,font=\footnotesize] at (4,-.4) {$\mu_4=(k+1,k,3)$};

\end{scope}
\begin{scope}[shift={(12.6,-5.8)}]
\foreach \x in {1,...,7}
  \draw[black,line width=.35pt] (\x,0)--(\x,2);
\foreach \x in {1,2,3}
  \draw[black,line width=.35pt] (\x,2)--(\x,3);
\draw[black,line width=.35pt] (0,1)--(8,1) (0,2)--(4,2);
\draw[black,line width=.65pt] (0,0)--(8,0)--(8,2)--(4,2)--(4,3)--(0,3)--cycle;
\draw[black,line width=.65pt,rounded corners=1.5pt] (0.5,2.5)--(3.5,2.5)--(3.5,1.5)--(7.5,1.5);
\draw[black,fill=white,line width=.45pt] (0.5,2.5) circle (.065);
\draw[black,fill=white,line width=.45pt] (7.5,1.5) circle (.065);
\draw[black,line width=.65pt,rounded corners=1.5pt] (0.5,1.5)--(2.5,1.5)--(2.5,0.5)--(7.5,0.5);
\draw[black,fill=white,line width=.45pt] (0.5,1.5) circle (.065);
\draw[black,fill=white,line width=.45pt] (7.5,0.5) circle (.065);
\draw[black,line width=.65pt,rounded corners=1.5pt] (0.5,0.5)--(1.5,0.5);
\draw[black,fill=white,line width=.45pt] (0.5,0.5) circle (.065);
\draw[black,fill=white,line width=.45pt] (1.5,0.5) circle (.065);
\node[anchor=east] at (-.6,1.5) {$T_5$};
\node[anchor=north,font=\footnotesize] at (4,-.4) {$\mu_5=(k+1,k+1,2)$};

\end{scope}
\begin{scope}[shift={(25.2,-5.8)}]
\foreach \x in {1,...,7}
  \draw[black,line width=.35pt] (\x,0)--(\x,2);
\foreach \x in {1,2,3}
  \draw[black,line width=.35pt] (\x,2)--(\x,3);
\draw[black,line width=.35pt] (0,1)--(8,1) (0,2)--(4,2);
\draw[black,line width=.65pt] (0,0)--(8,0)--(8,2)--(4,2)--(4,3)--(0,3)--cycle;
\draw[black,line width=.65pt,rounded corners=1.5pt] (0.5,2.5)--(3.5,2.5)--(3.5,1.5)--(7.5,1.5)--(7.5,0.5);
\draw[black,fill=white,line width=.45pt] (0.5,2.5) circle (.065);
\draw[black,fill=white,line width=.45pt] (7.5,0.5) circle (.065);
\draw[black,line width=.65pt,rounded corners=1.5pt] (0.5,1.5)--(2.5,1.5);
\draw[black,fill=white,line width=.45pt] (0.5,1.5) circle (.065);
\draw[black,fill=white,line width=.45pt] (2.5,1.5) circle (.065);
\draw[black,line width=.65pt,rounded corners=1.5pt] (0.5,0.5)--(6.5,0.5);
\draw[black,fill=white,line width=.45pt] (0.5,0.5) circle (.065);
\draw[black,fill=white,line width=.45pt] (6.5,0.5) circle (.065);
\node[anchor=east] at (-.6,1.5) {$T_6$};
\node[anchor=north,font=\footnotesize] at (4,-.4) {$\mu_6=(k+2,k-1,3)$};

\end{scope}
\end{tikzpicture}
\caption{The six tails of shape $(k,k,4)$, drawn schematically with contents $\mu_1,\ldots,\mu_6$.
Both lower rows have length $k$; paths indicate the rim hooks.
The signs are positive for $T_1,T_5,T_6$ and negative for $T_2,T_3,T_4$.}
\label{fig:six-ribbons}
\label{tab:six}
\end{figure}

We next enumerate the semiordered chain partitions of these six types.

\begin{proposition}\label{prop:counts}
Let $n\ge4$ and $m\ge3n-1$, and put $k=m-n$. Suppose that $\lambda\vdash mn$ satisfies $\lambda_i=m+n-2i+1$ for $1\le i\le n-2$. For $j\in[6]$, if $\lambda_{\langle n-1\rangle}=\mu_j$, where $\mu_j$ is listed in \cref{tab:six}, then
\begin{equation}\label{eq:counts-prop}
\bigl|\operatorname{SCP}_{\bchain m\times\bchain n,\lambda}\bigr|
=(n-2)!F_j(n,k),
\end{equation}
where
\begin{equation}\label{eq:gap-count}
F_j(n,k)=\sum_{d=1}^{n-1}(n-d)B_j(n,k,d)
 +\sum_{r=1}^{3}\varepsilon_{j,r}(n,k).
\end{equation}
The polynomials $B_j=B_j(n,k,d)$ are
\begin{align}
6B_1={}&d^4+(2-4k)d^3+(6k^2-6k+5)d^2\notag\\*
&+(-4k^3+6k^2+38k-188)d+2k^4+12k^3+28k^2+30k+192n-180,\label{eq:B1}\\
12B_2={}&d^4+(2-4k)d^3+(6k^2-6k-1)d^2\notag\\*
&+(-4k^3+6k^2+74k-302)d+4k^4+20k^3+32k^2+16k+324n-324,\label{eq:B2}\\
B_3={}&(2k-3)d+3k^2+9k+11n-5,\label{eq:B3}\\
3B_4={}&(18k-48)d+4k^3+21k^2+35k+72n-54,\label{eq:B4}\\
B_5={}&2\bigl((2k-3)d+2k^2+8k+9n-2\bigr),\label{eq:B5}\\
B_6={}&(2k-6)d+k^3+3k^2+2k+12n-12,\label{eq:B6}
\end{align}
and the corrections $\varepsilon_{j,r}$ are given in \cref{tab:corrections}.
Here $j$ always labels a tail in \cref{tab:six}; $d$ denotes a row gap, and $r\in[3]$ labels its exceptional values. For example, $\mu_1=(k,k,4)=\langle4^1,k^2\rangle$. The proposition concerns exactly the six listed tails.
\end{proposition}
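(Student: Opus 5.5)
We split every semiordered chain partition of type $\lambda$ into the $n-2$ long chains and a three-chain partition of what remains. By the rank argument after \eqref{eq:capacity}, the chain of size $\rho_i$ is saturated and occupies exactly ranks $i-1,\ldots,m+n-i-1$. It is also the unique block of its size, because $\rho_{n-2}=k+5$ exceeds every tail part. The semiordering therefore matters only inside the tail, and only when $\mu_j$ has a repeated part ($\mu_1$ and $\mu_5$). This is what produces the factor $2$ visible in $B_5$ and the normalisation of $B_1$.

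The first step parametrizes the families of long chains. Peel them off from the outside: $C_1$ is a maximal saturated chain of $P$, $C_2$ is a saturated chain of the next interval of ranks avoiding $C_1$, and so on. The complement $R$ of $C_1\cup\dots\cup C_{n-2}$ has one element at each of ranks $0,\ldots,n-2$ and at the symmetric top ranks, two elements at each internal rank $q$ with $n-2\le q\le m$, and is closed under the rank profile. I will encode the configuration by the two row coordinates (the $y$-values) of the elements of $R$ at each internal rank. The claim I would prove is that the union of the long chains is determined by $R$, and that the ways to label it as $C_1,\dots,C_{n-2}$ with the required sizes are in bijection with $S_{n-2}$. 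The intended mechanism is that at each step the rows left free are interchangeable, so assigning the long chains to row-slots is a free permutation. This is where $(n-2)!$ comes from. The set $R$ is then described by a pair of lattice paths (lower and upper row of the two leftovers) that move by at most one row per rank step. Between the boundaries, these paths keep a constant gap $d\in[1,n-1]$ over a middle interval, and such a middle configuration can be positioned in $n-d$ ways. This explains the weight $(n-d)$ in \eqref{eq:gap-count}.

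The second step counts the partitions of $R$ into three chains of sizes $\mu_j$. The smallest chain has at most $4$ elements, and $k\ge 2n-1$ makes the middle interval long. Hence the two large chains must each follow one of the two rows of $R$ through the middle, and can switch or absorb small-chain elements only within a bounded distance of the two ends. For each gap $d$ I will enumerate the placements of the small chain and the endpoint behaviour at the two boundary regions. These are finitely many local patterns whose number of positions is polynomial in $k$ and $d$, and summing them gives $B_j(n,k,d)$. The linear dependence on $n$ in $B_j$ comes from the lengths of the one-element tails at ranks $<n-2$. The values of $d$ at which some local pattern degenerates ($d=1$, $d=2$, $d=n-1$, where the gap touches the walls or the rows coincide) are treated separately. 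Their discrepancies from the generic formula are recorded as the corrections $\varepsilon_{j,r}$.

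The main obstacle is the first step. I need to show rigorously that the long-chain families factor as (a permutation in $S_{n-2}$) $\times$ (the complement data), with every admissible pair $R$ realized exactly $(n-2)!$ times. I would first try induction on $n$: removing the outermost saturated chain $C_1$ reduces to a product-like poset of width $n-1$, and I would check that the choices at each stage are independent of earlier ones. The tail case analysis is long but mechanical; I would verify it against direct computer enumeration for small $n,k$ before trusting the polynomials $B_j$ and the corrections in \cref{tab:corrections}.
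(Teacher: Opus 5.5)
Your outline (saturated long chains, a factor $(n-2)!$, a middle interval where the two large tail chains sit in fixed rows at gap $d$, bounded boundary regions, small-gap corrections) follows the paper's architecture. The trouble is that your model of the complement is wrong, and the error sits exactly where the content of $B_j$ comes from.

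First, the long chains fill every rank below $n-2$ and above $m$. So $R$ has one vertex at each of ranks $n-2$ and $m$ and two at each of the $k+1$ ranks between, giving $|R|=2k+4=|\mu_j|$. There are no one-element tails at ranks $<n-2$, so they cannot be the source of the $n$ in $B_j$.

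Second, and more seriously, the vacant rows are not unit-step paths. Write a vertex of level $t$ (rank $n-2+t$) in row $y$ as $(n+t-y,y)$. The possible complements are exactly the row data with $p_1\le h_0<q_1$, $p_{t+1}\le p_t<q_{t+1}\le q_t$, and $p_{k+1}<h_{k+2}\le q_{k+1}$; these are in bijection with the vertical-step data of the long chains. Both rows can drop by arbitrary amounts, and this happens in contributing partitions. For instance, a boundary level whose upper vertex is the small chain in relative row $\xi>d$ can be followed directly by $B$ in relative row $d$ at the first $AB$-level, for every $\xi$ up to $n-a$, where $a$ is the lower middle row. These configurations give the paper's terms $(n-a-d)V_p$. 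Summing over $a$ produces $(n-d)(n-d-1)/2$, and only after dividing by $n-d$ does $B_j$ acquire its linear dependence on $n$. So $n-d$ is not simply the number of placements of an $a$-independent middle configuration. A count restricted to unit-step rows would lose these terms and could not reproduce the stated $B_j$.

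The missing tool is the comparability criterion: vertices at levels $t<t'$ in rows $y,y'$ are comparable iff $0\le y'-y\le t'-t$. Played against the weak decrease of the vacant rows, this two-sided squeeze does three things. It fixes the rows of $A$ and $B$ between the first and last $AB$-levels. It keeps consecutive small-chain vertices in the same position at the same row. And it reduces each boundary to three cases: always lower, always upper, lower-then-upper.

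It also locates the exceptional gaps, and your list $\{1,2,n-1\}$ would not match \cref{tab:corrections}. Corrections arise from $H_p(d)=\max(0,p+1-d)>0$, meaning lower $B$'s sharing the row of $A$ inside a boundary of length $p\le4$. They also arise from the position count $v_h(0)$ at $d=1$, where $\max(0,d-2)\ne d-2$. Hence they lie at $d\le4$: $\varepsilon_{j,3}\ne0$ for $j=1,2$, and the $d=4$ correction vanishes. The gap $d=n-1$ is not special, because $k\ge2n-1$ gives $M+3\ge n-1\ge d$ and $M-d+1\ge h$.

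For the factor $(n-2)!$, peeling off $C_1$ and inducting is awkward because $P\setminus C_1$ is in general not a product of chains. The direct argument works instead. On the common ranks $n-3,\ldots,m+1$ the saturated chains keep their left-to-right order and are determined by the complement. Every $\sigma\in\mathfrak S_{n-2}$ assigning these segments to the sizes $\lambda_1,\ldots,\lambda_{n-2}$ then extends uniquely at both ends.

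Finally, deferring the boundary and middle case analysis to computer checks leaves the explicit polynomials $B_j$ and the corrections $\varepsilon_{j,r}$ unproved.
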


\begin{table}[htbp]\centering\small
\caption{Small-gap corrections; all corrections for $d\ge4$ vanish.}\label{tab:corrections}
\setlength{\tabcolsep}{3.3pt}
\renewcommand{\arraystretch}{1.2}
\begin{tabular}{@{}c*{6}{c}@{}}\toprule
$j$&$1$&$2$&$3$&$4$&$5$&$6$\\\midrule
$\varepsilon_{j,1}$&$\begin{gathered}2(n-1)(k^2+7k\\+14n-22)\end{gathered}$&$\begin{gathered}(n-1)(k^2+5k\\+17n-20)\end{gathered}$&$n(n-1)$&$\begin{gathered}2(n-1)\\(2k+4n-3)\end{gathered}$&$2n(n-1)$&$2n(n-1)$\\[2pt]
$\varepsilon_{j,2}$&$\begin{gathered}2(n-2)\\(2k+5n-9)\end{gathered}$&$\begin{gathered}(n-2)\\(2k+5n-11)\end{gathered}$&$0$&$2(n-2)(n-1)$&$0$&$0$\\[2pt]
$\varepsilon_{j,3}$&$2(n-3)(n-2)$&$(n-3)(n-2)$&$0$&$0$&$0$&$0$\\\bottomrule
\end{tabular}
\end{table}

\begin{proof}

We first parametrize the long chains and characterize their possible complements. We then count the three-chain partitions of the complements by separating a middle interval from two boundary regions. Finally, we evaluate the formulas for the six tail types.

\medskip
\noindent\textit{Step 1. Parametrizing the long chains.}
Fix $j\in[6]$, put $\mu=\mu_j$, and let $\lambda$ be the corresponding
full type in the proposition. In a semiordered chain partition of type
$\lambda$, let $C_i$ be the chain of size $\lambda_i$ for
$1\le i\le n-2$. We first parametrize these long chains and then
partition their complement. By the equality argument after
\eqref{eq:capacity}, each $C_i$ is saturated and occupies exactly
ranks $i-1,\ldots,m+n-i-1$.
On their common ranks $n-3,\ldots,m+1$, the paths preserve left-to-right
order: a change in their order would force two saturated paths to share a vertex.
Thus the $i$th path $Q_i$ joins $(i,n-1-i)$ to
$(m-n+2+i,n+1-i)$. It has exactly two vertical steps, at first
coordinates $r_i\le s_i$, and hence
\begin{equation}\label{eq:central-paths}
\begin{aligned}
Q_i={}&\{(x,n-1-i):i\le x\le r_i\}\cup\{(x,n-i):r_i\le x\le s_i\}\\
&\cup\{(x,n+1-i):s_i\le x\le m-n+2+i\}.
\end{aligned}
\end{equation}
Comparing adjacent paths on their two common rows gives
$r_i<r_{i+1}$ and $s_i<s_{i+1}$. The only nonadjacent paths sharing
a row are $Q_i,Q_{i+2}$; their intervals are disjoint because
$r_i<s_{i+2}$. Consequently the path data are exactly
\begin{equation}\label{eq:upsilon}
\Upsilon=(\mathbf r,\mathbf s),\qquad
1\le r_1<\cdots<r_{n-2}\le m,\quad
1\le s_1<\cdots<s_{n-2}\le m,\quad r_i\le s_i.
\end{equation}
Conversely, any $\Upsilon$ satisfying \eqref{eq:upsilon} has
$i\le r_i\le s_i\le m-n+2+i$, so \eqref{eq:central-paths}
gives disjoint paths with the required endpoints.
Equality $r_i=s_i$ is allowed.

For any $\sigma\in\mathfrak S_{n-2}$, assign $Q_i$ to
$C_{\sigma_i}$. At each of ranks $q-1$ and
$m+n-q-1$, $1\le q\le n-2$, place the active labels $1,\ldots,q$
from left to right in their order in $\sigma$. When $q$ changes by
one, each existing label shifts by at most one position. Each shift
gives a saturated step. We can therefore extend the paths at both ends
to disjoint chains of the prescribed lengths. The extension is unique,
since disjoint paths preserve order and the active labels are fixed. Thus the long-chain families are in bijection with the pairs
$(\sigma,\Upsilon)$.
Their complement is the induced poset
\[
R_\Upsilon=
\left\{(x,y)\in[m]\times[n]:n-2\le x+y-2\le m\right\}
\mathbin{\setminus}\bigcup_{i=1}^{n-2}Q_i,
\]
independent of $\sigma$. Since every part of $\mu$ is at most
$k+2<k+5=\lambda_{n-2}$, removing the long chains and adjoining them
again are inverse operations on semiordered partitions. Both operations
preserve the order of equal-sized tail chains. Therefore
\begin{equation}\label{eq:factor-yang}
\bigl|\operatorname{SCP}_{\bchain m\times\bchain n,\lambda}\bigr|
=(n-2)!\sum_\Upsilon
\bigl|\operatorname{SCP}_{R_\Upsilon,\mu}\bigr|.
\end{equation}
It remains to evaluate
\begin{equation}\label{eq:Fi-def}
F_j(n,k)=\sum_\Upsilon\bigl|\operatorname{SCP}_{R_\Upsilon,\mu_j}\bigr|.
\end{equation}

\begin{figure}[htbp]\centering
\begingroup
\definecolor{yrBlue}{RGB}{29,100,165}
\definecolor{yrOrange}{RGB}{205,105,20}
\definecolor{yrPurple}{RGB}{122,76,154}
\definecolor{yrGreen}{RGB}{64,135,107}
\definecolor{yrGold}{RGB}{209,163,48}
\definecolor{yrRed}{RGB}{155,60,63}

\begin{tikzpicture}[x=1cm,y=1cm,line cap=round,line join=round,font=\small]
\begin{scope}[xshift=0.0cm]
\node at (1.00,4.85) {$\mathbf{11}\times\mathbf4$};
\node[font=\scriptsize] at (-1.42,4.70) {$q$};
\node[font=\scriptsize,anchor=east] at (-1.28,0.000) {$ 0 $};
\draw[gray!22,densely dotted,line width=.3pt] (-1.12,0.000) -- (3.43,0.000);
\node[font=\scriptsize,anchor=east] at (-1.28,0.340) {$ 1 $};
\draw[gray!22,densely dotted,line width=.3pt] (-1.12,0.340) -- (3.43,0.340);
\node[font=\scriptsize,anchor=east] at (-1.28,0.680) {$ 2 $};
\draw[gray!22,densely dotted,line width=.3pt] (-1.12,0.680) -- (3.43,0.680);
\node[font=\scriptsize,anchor=east] at (-1.28,1.020) {$ 3 $};
\draw[gray!22,densely dotted,line width=.3pt] (-1.12,1.020) -- (3.43,1.020);
\node[font=\scriptsize,anchor=east] at (-1.28,1.360) {$ 4 $};
\node[font=\scriptsize,anchor=east] at (-1.28,1.700) {$ 5 $};
\node[font=\scriptsize,anchor=east] at (-1.28,2.040) {$ 6 $};
\node[font=\scriptsize,anchor=east] at (-1.28,2.380) {$ 7 $};
\node[font=\scriptsize,anchor=east] at (-1.28,2.720) {$ 8 $};
\node[font=\scriptsize,anchor=east] at (-1.28,3.060) {$ 9 $};
\node[font=\scriptsize,anchor=east] at (-1.28,3.400) {$ 10 $};
\draw[gray!22,densely dotted,line width=.3pt] (-1.12,3.400) -- (3.43,3.400);
\node[font=\scriptsize,anchor=east] at (-1.28,3.740) {$ 11 $};
\draw[gray!22,densely dotted,line width=.3pt] (-1.12,3.740) -- (3.43,3.740);
\node[font=\scriptsize,anchor=east] at (-1.28,4.080) {$ 12 $};
\draw[gray!22,densely dotted,line width=.3pt] (-1.12,4.080) -- (3.43,4.080);
\node[font=\scriptsize,anchor=east] at (-1.28,4.420) {$ 13 $};
\draw[gray!22,densely dotted,line width=.3pt] (-1.12,4.420) -- (3.43,4.420);
\draw[gray!58,line width=.45pt] (0.000,0.000) -- (-0.320,0.340);
\draw[gray!58,line width=.45pt] (0.000,0.000) -- (0.320,0.340);
\draw[gray!58,line width=.45pt] (-0.320,0.340) -- (-0.640,0.680);
\draw[gray!58,line width=.45pt] (-0.320,0.340) -- (0.000,0.680);
\draw[gray!58,line width=.45pt] (-0.640,0.680) -- (-0.960,1.020);
\draw[gray!58,line width=.45pt] (-0.640,0.680) -- (-0.320,1.020);
\draw[gray!58,line width=.45pt] (-0.960,1.020) -- (-0.640,1.360);
\draw[gray!58,line width=.45pt] (0.320,0.340) -- (0.000,0.680);
\draw[gray!58,line width=.45pt] (0.320,0.340) -- (0.640,0.680);
\draw[gray!58,line width=.45pt] (0.000,0.680) -- (-0.320,1.020);
\draw[gray!58,line width=.45pt] (0.000,0.680) -- (0.320,1.020);
\draw[gray!58,line width=.45pt] (-0.320,1.020) -- (-0.640,1.360);
\draw[gray!58,line width=.45pt] (-0.320,1.020) -- (0.000,1.360);
\draw[gray!58,line width=.45pt] (-0.640,1.360) -- (-0.320,1.700);
\draw[gray!58,line width=.45pt] (0.640,0.680) -- (0.320,1.020);
\draw[gray!58,line width=.45pt] (0.640,0.680) -- (0.960,1.020);
\draw[gray!58,line width=.45pt] (0.320,1.020) -- (0.000,1.360);
\draw[gray!58,line width=.45pt] (0.320,1.020) -- (0.640,1.360);
\draw[gray!58,line width=.45pt] (0.000,1.360) -- (-0.320,1.700);
\draw[gray!58,line width=.45pt] (0.000,1.360) -- (0.320,1.700);
\draw[gray!58,line width=.45pt] (-0.320,1.700) -- (0.000,2.040);
\draw[gray!58,line width=.45pt] (0.960,1.020) -- (0.640,1.360);
\draw[gray!58,line width=.45pt] (0.960,1.020) -- (1.280,1.360);
\draw[gray!58,line width=.45pt] (0.640,1.360) -- (0.320,1.700);
\draw[gray!58,line width=.45pt] (0.640,1.360) -- (0.960,1.700);
\draw[gray!58,line width=.45pt] (0.320,1.700) -- (0.000,2.040);
\draw[gray!58,line width=.45pt] (0.320,1.700) -- (0.640,2.040);
\draw[gray!58,line width=.45pt] (0.000,2.040) -- (0.320,2.380);
\draw[gray!58,line width=.45pt] (1.280,1.360) -- (0.960,1.700);
\draw[gray!58,line width=.45pt] (1.280,1.360) -- (1.600,1.700);
\draw[gray!58,line width=.45pt] (0.960,1.700) -- (0.640,2.040);
\draw[gray!58,line width=.45pt] (0.960,1.700) -- (1.280,2.040);
\draw[gray!58,line width=.45pt] (0.640,2.040) -- (0.320,2.380);
\draw[gray!58,line width=.45pt] (0.640,2.040) -- (0.960,2.380);
\draw[gray!58,line width=.45pt] (0.320,2.380) -- (0.640,2.720);
\draw[gray!58,line width=.45pt] (1.600,1.700) -- (1.280,2.040);
\draw[gray!58,line width=.45pt] (1.600,1.700) -- (1.920,2.040);
\draw[gray!58,line width=.45pt] (1.280,2.040) -- (0.960,2.380);
\draw[gray!58,line width=.45pt] (1.280,2.040) -- (1.600,2.380);
\draw[gray!58,line width=.45pt] (0.960,2.380) -- (0.640,2.720);
\draw[gray!58,line width=.45pt] (0.960,2.380) -- (1.280,2.720);
\draw[gray!58,line width=.45pt] (0.640,2.720) -- (0.960,3.060);
\draw[gray!58,line width=.45pt] (1.920,2.040) -- (1.600,2.380);
\draw[gray!58,line width=.45pt] (1.920,2.040) -- (2.240,2.380);
\draw[gray!58,line width=.45pt] (1.600,2.380) -- (1.280,2.720);
\draw[gray!58,line width=.45pt] (1.600,2.380) -- (1.920,2.720);
\draw[gray!58,line width=.45pt] (1.280,2.720) -- (0.960,3.060);
\draw[gray!58,line width=.45pt] (1.280,2.720) -- (1.600,3.060);
\draw[gray!58,line width=.45pt] (0.960,3.060) -- (1.280,3.400);
\draw[gray!58,line width=.45pt] (2.240,2.380) -- (1.920,2.720);
\draw[gray!58,line width=.45pt] (2.240,2.380) -- (2.560,2.720);
\draw[gray!58,line width=.45pt] (1.920,2.720) -- (1.600,3.060);
\draw[gray!58,line width=.45pt] (1.920,2.720) -- (2.240,3.060);
\draw[gray!58,line width=.45pt] (1.600,3.060) -- (1.280,3.400);
\draw[gray!58,line width=.45pt] (1.600,3.060) -- (1.920,3.400);
\draw[gray!58,line width=.45pt] (1.280,3.400) -- (1.600,3.740);
\draw[gray!58,line width=.45pt] (2.560,2.720) -- (2.240,3.060);
\draw[gray!58,line width=.45pt] (2.560,2.720) -- (2.880,3.060);
\draw[gray!58,line width=.45pt] (2.240,3.060) -- (1.920,3.400);
\draw[gray!58,line width=.45pt] (2.240,3.060) -- (2.560,3.400);
\draw[gray!58,line width=.45pt] (1.920,3.400) -- (1.600,3.740);
\draw[gray!58,line width=.45pt] (1.920,3.400) -- (2.240,3.740);
\draw[gray!58,line width=.45pt] (1.600,3.740) -- (1.920,4.080);
\draw[gray!58,line width=.45pt] (2.880,3.060) -- (2.560,3.400);
\draw[gray!58,line width=.45pt] (2.880,3.060) -- (3.200,3.400);
\draw[gray!58,line width=.45pt] (2.560,3.400) -- (2.240,3.740);
\draw[gray!58,line width=.45pt] (2.560,3.400) -- (2.880,3.740);
\draw[gray!58,line width=.45pt] (2.240,3.740) -- (1.920,4.080);
\draw[gray!58,line width=.45pt] (2.240,3.740) -- (2.560,4.080);
\draw[gray!58,line width=.45pt] (1.920,4.080) -- (2.240,4.420);
\draw[gray!58,line width=.45pt] (3.200,3.400) -- (2.880,3.740);
\draw[gray!58,line width=.45pt] (2.880,3.740) -- (2.560,4.080);
\draw[gray!58,line width=.45pt] (2.560,4.080) -- (2.240,4.420);
\draw[yrBlue,line width=1.6pt] (0.000,0.000) -- (0.320,0.340) -- (0.640,0.680) -- (0.960,1.020) -- (1.280,1.360) -- (0.960,1.700) -- (1.280,2.040) -- (1.600,2.380) -- (1.920,2.720) -- (2.240,3.060) -- (1.920,3.400) -- (2.240,3.740) -- (2.560,4.080) -- (2.240,4.420);
\draw[yrOrange,line width=1.6pt] (-0.320,0.340) -- (0.000,0.680) -- (0.320,1.020) -- (0.000,1.360) -- (0.320,1.700) -- (0.640,2.040) -- (0.960,2.380) -- (1.280,2.720) -- (0.960,3.060) -- (1.280,3.400) -- (1.600,3.740) -- (1.920,4.080);
\fill[yrBlue] (0.000,0.000) circle (1.55pt);
\fill[yrOrange] (-0.320,0.340) circle (1.55pt);
\fill[black] (-0.640,0.680) circle (1.55pt);
\fill[black] (-0.960,1.020) circle (1.55pt);
\fill[yrBlue] (0.320,0.340) circle (1.55pt);
\fill[yrOrange] (0.000,0.680) circle (1.55pt);
\fill[black] (-0.320,1.020) circle (1.55pt);
\fill[black] (-0.640,1.360) circle (1.55pt);
\fill[yrBlue] (0.640,0.680) circle (1.55pt);
\fill[yrOrange] (0.320,1.020) circle (1.55pt);
\fill[yrOrange] (0.000,1.360) circle (1.55pt);
\fill[black] (-0.320,1.700) circle (1.55pt);
\fill[yrBlue] (0.960,1.020) circle (1.55pt);
\fill[black] (0.640,1.360) circle (1.55pt);
\fill[yrOrange] (0.320,1.700) circle (1.55pt);
\fill[black] (0.000,2.040) circle (1.55pt);
\fill[yrBlue] (1.280,1.360) circle (1.55pt);
\fill[yrBlue] (0.960,1.700) circle (1.55pt);
\fill[yrOrange] (0.640,2.040) circle (1.55pt);
\fill[black] (0.320,2.380) circle (1.55pt);
\fill[black] (1.600,1.700) circle (1.55pt);
\fill[yrBlue] (1.280,2.040) circle (1.55pt);
\fill[yrOrange] (0.960,2.380) circle (1.55pt);
\fill[black] (0.640,2.720) circle (1.55pt);
\fill[black] (1.920,2.040) circle (1.55pt);
\fill[yrBlue] (1.600,2.380) circle (1.55pt);
\fill[yrOrange] (1.280,2.720) circle (1.55pt);
\fill[yrOrange] (0.960,3.060) circle (1.55pt);
\fill[black] (2.240,2.380) circle (1.55pt);
\fill[yrBlue] (1.920,2.720) circle (1.55pt);
\fill[black] (1.600,3.060) circle (1.55pt);
\fill[yrOrange] (1.280,3.400) circle (1.55pt);
\fill[black] (2.560,2.720) circle (1.55pt);
\fill[yrBlue] (2.240,3.060) circle (1.55pt);
\fill[yrBlue] (1.920,3.400) circle (1.55pt);
\fill[yrOrange] (1.600,3.740) circle (1.55pt);
\fill[black] (2.880,3.060) circle (1.55pt);
\fill[black] (2.560,3.400) circle (1.55pt);
\fill[yrBlue] (2.240,3.740) circle (1.55pt);
\fill[yrOrange] (1.920,4.080) circle (1.55pt);
\fill[black] (3.200,3.400) circle (1.55pt);
\fill[black] (2.880,3.740) circle (1.55pt);
\fill[yrBlue] (2.560,4.080) circle (1.55pt);
\fill[yrBlue] (2.240,4.420) circle (1.55pt);
\node[font=\scriptsize,anchor=north] at (0.000,-0.080) {$(1,1)$};
\node[font=\scriptsize,anchor=south] at (2.240,4.500) {$(11,4)$};
\draw[yrBlue,line width=1.6pt] (-.70,-.64)--(-.30,-.64);
\node[anchor=west] at (-.22,-.64) {$C_1\ (14)$};
\draw[yrOrange,line width=1.6pt] (1.58,-.64)--(1.98,-.64);
\node[anchor=west] at (2.06,-.64) {$C_2\ (12)$};
\end{scope}
\begin{scope}[xshift=6.65cm]
\node at (1.00,4.85) {$R_\Upsilon$};
\node[font=\scriptsize] at (-1.42,4.70) {$q$};
\node[font=\scriptsize,anchor=east] at (-1.28,0.680) {$ 2 $};
\draw[gray!22,densely dotted,line width=.3pt] (-1.12,0.680) -- (3.43,0.680);
\node[font=\scriptsize,anchor=east] at (-1.28,1.020) {$ 3 $};
\draw[gray!22,densely dotted,line width=.3pt] (-1.12,1.020) -- (3.43,1.020);
\node[font=\scriptsize,anchor=east] at (-1.28,1.360) {$ 4 $};
\node[font=\scriptsize,anchor=east] at (-1.28,1.700) {$ 5 $};
\node[font=\scriptsize,anchor=east] at (-1.28,2.040) {$ 6 $};
\node[font=\scriptsize,anchor=east] at (-1.28,2.380) {$ 7 $};
\node[font=\scriptsize,anchor=east] at (-1.28,2.720) {$ 8 $};
\node[font=\scriptsize,anchor=east] at (-1.28,3.060) {$ 9 $};
\node[font=\scriptsize,anchor=east] at (-1.28,3.400) {$ 10 $};
\draw[gray!22,densely dotted,line width=.3pt] (-1.12,3.400) -- (3.43,3.400);
\node[font=\scriptsize,anchor=east] at (-1.28,3.740) {$ 11 $};
\draw[gray!22,densely dotted,line width=.3pt] (-1.12,3.740) -- (3.43,3.740);
\draw[black!65,line width=.65pt] (-0.640,0.680) -- (-0.960,1.020);
\draw[black!65,line width=.65pt] (-0.640,0.680) -- (-0.320,1.020);
\draw[black!65,line width=.65pt] (-0.960,1.020) -- (-0.640,1.360);
\draw[black!65,line width=.65pt] (-0.320,1.020) -- (-0.640,1.360);
\draw[yrRed,densely dashed,line width=.8pt] (-0.320,1.020) -- (1.600,3.060);
\draw[black!65,line width=.65pt] (-0.640,1.360) -- (-0.320,1.700);
\draw[black!65,line width=.65pt] (-0.320,1.700) -- (0.000,2.040);
\draw[yrRed,densely dashed,line width=.8pt] (0.640,1.360) -- (0.000,2.040);
\draw[yrRed,densely dashed,line width=.8pt] (0.640,1.360) -- (1.600,3.060);
\draw[yrRed,densely dashed,line width=.8pt] (0.640,1.360) -- (2.560,3.400);
\draw[black!65,line width=.65pt] (0.000,2.040) -- (0.320,2.380);
\draw[black!65,line width=.65pt] (0.320,2.380) -- (0.640,2.720);
\draw[yrRed,densely dashed,line width=.8pt] (1.600,1.700) -- (0.640,2.720);
\draw[black!65,line width=.65pt] (1.600,1.700) -- (1.920,2.040);
\draw[black!65,line width=.65pt] (1.920,2.040) -- (2.240,2.380);
\draw[yrRed,densely dashed,line width=.8pt] (2.240,2.380) -- (1.600,3.060);
\draw[black!65,line width=.65pt] (2.240,2.380) -- (2.560,2.720);
\draw[black!65,line width=.65pt] (2.560,2.720) -- (2.880,3.060);
\draw[black!65,line width=.65pt] (2.880,3.060) -- (2.560,3.400);
\draw[black!65,line width=.65pt] (2.880,3.060) -- (3.200,3.400);
\draw[black!65,line width=.65pt] (2.560,3.400) -- (2.880,3.740);
\draw[black!65,line width=.65pt] (3.200,3.400) -- (2.880,3.740);
\fill[black] (-0.640,0.680) circle (1.55pt);
\fill[black] (-0.960,1.020) circle (1.55pt);
\fill[black] (-0.320,1.020) circle (1.55pt);
\fill[black] (-0.640,1.360) circle (1.55pt);
\fill[black] (-0.320,1.700) circle (1.55pt);
\fill[black] (0.640,1.360) circle (1.55pt);
\fill[black] (0.000,2.040) circle (1.55pt);
\fill[black] (0.320,2.380) circle (1.55pt);
\fill[black] (1.600,1.700) circle (1.55pt);
\fill[black] (0.640,2.720) circle (1.55pt);
\fill[black] (1.920,2.040) circle (1.55pt);
\fill[black] (2.240,2.380) circle (1.55pt);
\fill[black] (1.600,3.060) circle (1.55pt);
\fill[black] (2.560,2.720) circle (1.55pt);
\fill[black] (2.880,3.060) circle (1.55pt);
\fill[black] (2.560,3.400) circle (1.55pt);
\fill[black] (3.200,3.400) circle (1.55pt);
\fill[black] (2.880,3.740) circle (1.55pt);
\node[font=\scriptsize,anchor=north,fill=white,inner sep=1pt] at (-0.640,0.570) {$(1,3)$};
\node[font=\scriptsize,anchor=west,fill=white,inner sep=1pt] at (2.960,3.740) {$(11,2)$};
\node[font=\scriptsize,anchor=east,fill=white,inner sep=1pt] at (0.560,1.360) {$(4,2)$};
\node[font=\scriptsize,anchor=west,fill=white,inner sep=1pt] at (1.680,3.060) {$(8,3)$};
\node at (1.10,-.64) {$(1,\underbrace{2,\ldots,2}_{8},1)$};
\end{scope}
\end{tikzpicture}
\endgroup

\caption{Long chains and their complement in $\bchain{11}\times\bchain4$, with $\sigma=(2,1)$ and $\Upsilon=((3,5),(7,9))$. The labels $q$ are original ranks; dashed red cover edges of $R_\Upsilon$ join nonconsecutive original ranks.}\label{fig:poset-chains}
\end{figure}

\smallskip\noindent\emph{Step 2. Characterizing the complement.}
The complement has one vertex at each of ranks $n-2,m$ and two at
every intervening rank. Number these levels $t=0,\ldots,k+2$;
a vertex in row $y$ at level $t$ is $(n+t-y,y)$.
Write $h_0\in[n-1]$ and $h_{k+2}\in\{2,\ldots,n\}$ for the singleton
rows, and $1\le p_t<q_t\le n$ for the two rows at each internal level. For $1\le t\le k+1$, the vertices in rows $p_t$ and $q_t$
are called the lower and upper vertices of level $t$, respectively.
We show that sending $\Upsilon$ to the rows of its complement gives
a bijection onto the row data satisfying
\begin{equation}\label{eq:interlace}
p_1\le h_0<q_1,\quad
p_{t+1}\le p_t<q_{t+1}\le q_t\quad(1\le t\le k),\quad
p_{k+1}<h_{k+2}\le q_{k+1}.
\end{equation}

At two consecutive internal levels, let $(p,q)$ and $(p',q')$ be
the rows not used by the long chains. A long-chain vertex in row $u$
must be matched to row $u$ or $u+1$. If $p'>p$, the next level requires $p$ occupied vertices
in rows at most $p$, but only $p-1$ possible predecessors exist.
Similarly, once $p'\le p$, the inequality $q'>q$ would require
$q-1$ predecessors in rows at most $q$, where only $q-2$ exist.
Finally, if $q'\le p$, the $p-1$ occupied source rows below $p$
would have to fit into only $p-2$ occupied target rows at most $p$.
Thus $p'\le p<q'\le q$. Conversely, under these inequalities the map
\[
f(u)=
\begin{cases}
u+1,&p'\le u<p\ \text{or}\ q'\le u<q,\\
u,&\text{otherwise},
\end{cases}
\qquad u\in[n]\setminus\{p,q\},
\]
is a bijection onto $[n]\setminus\{p',q'\}$: its inverse subtracts
one on $(p',p]\cup(q',q]$ and fixes all other rows.
Any legal matching $g$ satisfies $g(u)\le u+1\le v\le g(v)$
for $u<v$; injectivity makes it strictly increasing, so it is
the unique increasing bijection of the occupied row sets.

Treat the nonexistent rows as vacancies. Apply
$p'\le p<q'\le q$ to $(h_0,n)$ and $(p_1,q_1)$ at the lower end,
and to $(p_{k+1},q_{k+1})$ and $(1,h_{k+2})$ at the upper end.
This gives the first and last inequalities in \eqref{eq:interlace}.
The matching at each end is the unique increasing bijection of the occupied row sets.
Extend to ranks $n-3$ and $m+1$
by shifting row $u$ up one exactly when $u\ge h_0$ at the lower
end and $u<h_{k+2}$ at the upper end. Order preservation pairs
the occupied rows $1,\ldots,n-2$ with $3,\ldots,n$, respectively.
Thus each path raises its row by two, has exactly two vertical
steps, and has the endpoints of Step~1. Their vertical-step
positions recover an admissible $\Upsilon$. Each matching is unique,
so the row data determine all the paths. Conversely, the constructed
paths occupy exactly the nonvacant rows. Hence the map from paths to
row data and the construction of paths from row data are inverse.

Label the three tail chains $A,B,C$ in the prescribed order,
including the order among equal sizes. Vertices at levels $t<t'$
and rows $y,y'$ are comparable precisely when
\begin{equation}\label{eq:color-chain}
0\le y'-y\le t'-t.
\end{equation}
Thus a valid tail partition is exactly a labelling with distinct
labels at each internal level, the prescribed label counts, and
\eqref{eq:color-chain} between consecutive occurrences of each label.
All other comparisons follow by transitivity.

\smallskip\noindent\emph{Step 3. Fixing the middle rows and the boundaries.}
Fix a valid tail partition. An internal level is an $AB$-level if its
labels are $A,B$; every other internal level contains $C$. Since
$|C|\le4$ and $k\ge7$, there are at least $k-3\ge4$ $AB$-levels.
The order of $A,B$ is constant at these levels. Indeed, if $A$ is
below $B$ at level $t$ but above it at a later $AB$-level $t'$, then
the row of $B$ decreases: \eqref{eq:interlace} gives
$p_{t'}\le p_t<q_t$, contradicting \eqref{eq:color-chain}. The reverse initial order is symmetric.
We first count the order with $A$ below $B$; the other will be
included by exchanging their labels.

Let their rows at the first $AB$-level be $a<b$, and set $d=b-a$.
At subsequent $AB$-levels, each row is weakly decreasing by
\eqref{eq:interlace} and weakly increasing along its fixed chain by
\eqref{eq:color-chain}; hence the rows remain $a,b$. At every
intervening level, \eqref{eq:interlace} gives
$a\ge p_t\ge a$ and $b\ge q_t\ge b$. Thus the two rows of the
complement are fixed between the first and last $AB$-levels. Moreover, every intervening vertex
of $A$ has row $a$, and every such vertex of $B$ has row $b$, by
applying \eqref{eq:color-chain} between that vertex and the two endpoint
$AB$-levels.
Consequently the middle labels are $AB$, $BC$ with $C$ in row $a$,
or $AC$ with $C$ in row $b$.

Let $p,s$ count the internal levels before the first and after the
last $AB$-level. The lower boundary consists of the $p$ internal levels
and singleton level $0$. The upper boundary consists of the $s$ internal
levels and singleton level $k+2$. Even when a boundary has no internal
levels, it still contains its singleton. Each internal boundary level contains $C$,
so
\begin{equation}\label{eq:ps}
p,s\ge0,\qquad p+s\le4.
\end{equation}
The middle consists of the levels strictly between the first and
last $AB$-levels, excluding these endpoints, and has length
\begin{equation}\label{eq:M}
M=k-1-p-s.
\end{equation}
All middle occurrences of $C$ in row $a$ precede those in row $b$;
the transition between these rows must also span at least $d$ levels
by \eqref{eq:color-chain}. We use these restrictions to count the
middle configurations in Step~5.

Assign weights $z,z^{-1},w$ to vertices labelled $A,B,C$, respectively.
A labelling then has weight
\begin{equation}\label{eq:weight}
z^{\#A-\#B}w^{\#C}.
\end{equation}
The fixed total $\#A+\#B+\#C=2k+4$ means that these two exponents
determine all three chain sizes. An $AB$, $AC$, or $BC$ level has
weight $1$, $zw$, or $z^{-1}w$, respectively. We work in
$\mathbb Q[z,z^{-1}][w]$ and write
$\overline f(z,w)=f(z^{-1},w)$ for the exchange of $A,B$.
We only need coefficients of $w$-degree at most four. We may therefore
compute modulo $(w^5)$; terms of higher $w$-degree do not affect the
required coefficients.

\smallskip\noindent\emph{Step 4. Counting a single boundary.}
We count the lower boundary, including its singleton, according to the
row of its last vertex labelled $C$. Reflection will give the upper
boundary in Step~5.
Subtract $a$ from each row coordinate and call the result the relative
row. Thus the first $AB$-level, at level $p+1$, has relative rows
$0,d$. The boundary consists of levels $0,\ldots,p$. Its weight
does not include the first $AB$-level, whose weight is $1$.
In Step~4, all rows refer to relative rows unless stated otherwise.
By \eqref{eq:interlace}, all boundary rows are nonnegative, and
the upper row at each internal boundary level is at least $d$.
If the boundary contains a vertex labelled $C$, let $\xi\ge0$
be the row of its last such vertex. Otherwise, set $\xi=\bot$. Let $b_p(\xi;d)$ be the sum of the
weights of boundaries with this value of $\xi$. For $p\ge1$, each
internal boundary level contains one $C$, so the last $C$ is at level
$p$. For $p=0$, it is the singleton if labelled $C$.

When $p=0$, the boundary consists only of its singleton.
By the first inequality in \eqref{eq:interlace}, its row belongs to
$\{0,\ldots,d-1\}$. Applying \eqref{eq:color-chain} between levels
$0$ and $1$ forces a singleton $A$ to have row $0$ and a singleton
$B$ to have row $d-1$.
A singleton $C$ has one position for each fixed allowed $\xi$.
For each $p$, let $N_p$ be the weight sum when there is no $C$.
Let $R_p$ be the contribution common to all $0\le\xi<d$, with an
additional contribution $D_p$ at $\xi=0$. Let $U_p,V_p$ be the weight
sums at $\xi=d$ and at each fixed $\xi>d$, respectively. For $p=0$,
we obtain
\begin{equation}\label{eq:profile-zero}
N_0=z+z^{-1},\qquad R_0=w,\qquad D_0=U_0=V_0=0.
\end{equation}

Now let $1\le p\le4$. Consecutive $C$'s in the same position
(lower or upper within their level) have equal rows. Indeed,
\eqref{eq:interlace} makes their rows weakly decreasing, whereas
\eqref{eq:color-chain} makes them weakly increasing.
An upper $C$ cannot be followed by a lower one, since
\eqref{eq:interlace} would make its row decrease. If $C$ changes
from the lower to the upper position, \eqref{eq:interlace} makes
its row strictly increase. By \eqref{eq:color-chain}, the increase
between consecutive levels is at most one, so it is exactly one.
Hence there are precisely three cases: $C$ is always lower, always
upper, or lower then upper.
By \eqref{eq:color-chain}, every internal $A$ has row at most
$0$, the row of $A$ at the first $AB$-level. Since all boundary rows
are nonnegative, every internal $A$ has row $0$ and is lower.
Every upper $B$ has row $d$: \eqref{eq:color-chain} gives the upper
bound $d$, and \eqref{eq:interlace} gives the lower bound $d$.

\smallskip\noindent\emph{Always lower.}
The internal pairs are $(\xi,d)$, with labels $C,B$, and
$0\le\xi<d$. The singleton row lies in $[\xi,d-1]$.
A singleton $B$ must have row $d-1$, and a singleton $C$ row $\xi$;
a singleton $A$ in row $0$ is allowed exactly when $\xi=0$.
These choices satisfy \eqref{eq:interlace} and
\eqref{eq:color-chain}, giving
\begin{equation}\label{eq:profile-low}
N_p=0,\qquad
R_p=w^pz^{-p-1}+w^{p+1}z^{-p},\qquad D_p=w^pz^{1-p}
\quad(1\le p\le4).
\end{equation}

\smallskip\noindent\emph{Always upper.}
Here $C$ has constant row $\xi\ge d$. The lower labels form a word
in $A,B$. All lower $B$'s, when present, have the same row $v$:
\eqref{eq:interlace} makes their rows weakly decreasing, and
\eqref{eq:color-chain} makes them weakly increasing. Thus $0\le v\le d$ and
$v<\xi$.
If their number is $\nu$, the internal weight is
$w^pz^{p-2\nu}$. To record the singleton choices, set
\begin{equation}\label{eq:Phi}
H_p(d)=\max\{0,p+1-d\},\quad
\Phi_{p,\nu}=w^pz^{p-2\nu-1}+w^{p+1}z^{p-2\nu},\quad
\Psi_{p,\nu}=w^pz^{p-2\nu+1}+\Phi_{p,\nu}.
\end{equation}
Here $\Phi_{p,\nu}$ includes one singleton choice for each of
the labels $B,C$, and $\Psi_{p,\nu}$ includes one singleton
choice for each of the labels $A,B,C$. We determine the singleton positions in the four disjoint cases (a)--(d).
All row intervals in these cases contain only integers.

\emph{(a) No lower $B$.}
The word is $A^p$, with pairs $(0,\xi)$. The singleton $A$ and $C$
have rows $0$ and $\xi-1$, respectively. A singleton $B$ must
satisfy \eqref{eq:interlace} at level $0$ and
\eqref{eq:color-chain} with the $B$ in row $d$ at level $p+1$.
Its row therefore lies in
$[\max(0,d-p-1),\min(d,\xi-1)]$.
There are $\min(d,p+1)$ choices when $\xi=d$, and one more when
$\xi>d$. The contribution is therefore
\[
w^pz^{p+1}
+\bigl(\min(d,p+1)+\ind{\xi>d}\bigr)w^pz^{p-1}
+w^{p+1}z^p.
\]
Here $\ind{E}$ equals $1$ if the condition $E$ holds and $0$
otherwise.

\emph{(b) Lower $B$-row $v=0$.}
If the last lower $B$ is at level $t$, applying
\eqref{eq:color-chain} to this vertex and the $B$ in row $d$ at
level $p+1$ gives $d\le p+1-t$, or $t\le p+1-d$.
Conversely, choose any nonempty subset of levels $1,\ldots,H_p(d)$
for the lower $B$'s, and put $A$ at the remaining lower positions.
All lower rows are $0$. The bound on $t$ ensures that the last lower
$B$ is comparable with the $B$ at the first $AB$-level.
For $\nu$ such levels there are $\binom{H_p(d)}\nu$ choices.
The singleton has row $0$, $0$, or $\xi-1$ when labelled $A$, $B$,
or $C$, respectively. The contribution is
$\sum_{\nu=1}^{H_p(d)}\binom{H_p(d)}\nu\Psi_{p,\nu}$.

\emph{(c) Lower $B$-row $1\le v \le d-1$.}
By \eqref{eq:interlace}, the lower rows cannot increase, so the
word is $B^\nu A^{p-\nu}$, $1\le\nu\le p$.
Applying \eqref{eq:color-chain} between the last lower $B$ and the
$B$ at the first $AB$-level gives $d-v\le p+1-\nu$; hence
$v\in[\max(1,d-p-1+\nu),d-1]$, giving
$\min(d-1,p+1-\nu)$ choices (zero when $d=1$).
The pairs $(v,\xi)$ followed by $(0,\xi)$ satisfy
\eqref{eq:interlace}, including the transition to $(0,d)$ at the
first $AB$-level.
The bound on $v$ ensures that the vertices labelled $B$ form a chain.
The singleton may be $B$ in row $v$ or $C$ in row $\xi-1$,
but not $A$. The contribution is
$\sum_{\nu=1}^{p}\min(d-1,p+1-\nu)\Phi_{p,\nu}$.

\emph{(d) Lower $B$-row $v=d$.}
Now $\xi>d$, and the word is again $B^\nu A^{p-\nu}$.
If $\nu=p$, the transition from $(d,\xi)$ to $(0,d)$ would
require $d<d$ in \eqref{eq:interlace}. Every $1\le\nu<p$ is valid.
A singleton $B$ in row $d$ or $C$ in row $\xi-1$ gives the
contribution $\sum_{\nu=1}^{p-1}\Phi_{p,\nu}$, only for $\xi>d$.

\smallskip\noindent\emph{Lower then upper.}
Let the change occur after level $\nu$, where $1\le\nu<p$.
Before the change, $C$ has row below $d$; after the change, it
has row at least $d$. Since the row increases by one at the change,
the two rows are $d-1,d$, and $\xi=d$.
The first $\nu$ levels have labels $C,B$ in rows $d-1,d$.
After the change, the lower label must be $A$ in row $0$: a lower
$B$ would have row less than $d$ and follow an earlier $B$ in row
$d$, contrary to \eqref{eq:color-chain}.
There is thus one internal configuration for each $\nu$.
The singleton must have row $d-1$. It may be labelled $B$ or $C$,
and it may be labelled $A$ exactly when $d=1$.
These row pairs satisfy \eqref{eq:interlace}. The vertices labelled
$C$ satisfy \eqref{eq:color-chain}, since their row is constant
except for an increase of one at the change. This case contributes
\[
\sum_{\nu=1}^{p-1}\Phi_{p,\nu}
+\ind{d=1}\sum_{\nu=1}^{p-1}w^pz^{p-2\nu+1}
\]
to the value at $\xi=d$.

We now combine the cases for $\xi\ge d$. For $\xi=d$, the
always-upper subcases (a)--(c) and the lower-then-upper case apply.
For $\xi>d$, all four always-upper subcases apply. The common term
$\sum_{\nu=1}^{p-1}\Phi_{p,\nu}$ comes from the lower-then-upper
case when $\xi=d$ and from subcase (d) when $\xi>d$.
In both cases, each fixed $\nu$ gives one internal configuration
and one singleton choice for each of the labels $B,C$.
Thus the weight sums at $\xi=d$ and $\xi>d$ share the polynomial
\begin{equation}\label{eq:Gamma}
\begin{aligned}
\Gamma_p(d)={}&w^pz^{p+1}+\min(d,p+1)w^pz^{p-1}+w^{p+1}z^p
 +\sum_{\nu=1}^{H_p(d)}\binom{H_p(d)}\nu\Psi_{p,\nu}\\
&+\sum_{\nu=1}^{p}\bigl(\min(d-1,p+1-\nu)+\ind{\nu<p}\bigr)\Phi_{p,\nu},
\end{aligned}
\end{equation}
and the additional singleton choices give
\begin{equation}\label{eq:UV}
U_p(d)=\Gamma_p(d)+\ind{d=1}\sum_{\nu=1}^{p-1}w^pz^{p-2\nu+1},
\qquad V_p(d)=\Gamma_p(d)+w^pz^{p-1}.
\end{equation}
Empty sums are zero. The complete boundary weight sum is
\begin{equation}\label{eq:profile-cases}
b_p(\xi;d)=\begin{cases}
N_p,&\xi=\bot,\\
R_p+D_p,&\xi=0,\\
R_p,&0<\xi<d,\\
U_p(d),&\xi=d,\\
V_p(d),&\xi>d.
\end{cases}
\end{equation}
For $\xi\ne\bot$, we require $\xi\le n-a$.
By \eqref{eq:color-chain}, the relative rows of vertices labelled
$A$, $B$, and $C$ are at most $0,d,\xi$, respectively.
Thus all relative rows are at most $\max(d,\xi)$.
Since $a+d\le n$ and $a+\xi\le n$, adding $a$ back shows that
all original boundary rows are at most $n$.
In original row coordinates, the first inequality in
\eqref{eq:interlace} gives $h_0<q_1\le n$, so the singleton
row is at most $n-1$.
The cases of Step~4 give all valid lower boundary configurations,
with each configuration counted once.

\smallskip\noindent\emph{Step 5. Joining the boundaries and extracting the counts.}
We now count the middle labellings compatible with the two boundaries.
Fix $1\le a<b=a+d\le n$ and $p,s\ge0$ with $p+s\le4$.
By Step~3, the first and last $AB$-levels are $p+1$ and $k+1-s$,
with $M=k-1-p-s$ middle levels between them.
We have $M-d+1=k-p-s-d\ge n-p-s\ge0$.
A binomial coefficient with nonnegative upper argument is taken
to be zero if its lower argument is larger.

The lower boundary's last $C$, when present, is at level $p$ and
row $a+\xi$, with $0\le\xi\le n-a$.
For the upper boundary, reflect the coordinates by
$(t,y)\mapsto(k+2-t,n+1-y)$. The reflection reverses the product
order. It sends the last $AB$-level to the first $AB$-level of the
reflected configuration, with the same row difference $d$. The lower
vertex of the reflected $AB$-level carries the original label $B$.
We therefore exchange $A,B$ and apply Step~4.
The first $C$ in the original upper boundary, when present, is at level
$k+2-s=p+M+3$. If its row is $y$, its reflected row is $n+1-y$.
After subtracting $n+1-b$ from the reflected row coordinates, the
last $C$ has relative row $\zeta=b-y$, with $0\le\zeta\le b-1$.
The upper boundary weight is therefore $\overline{b_s(\zeta;d)}$,
where $\overline f(z,w)=f(z^{-1},w)$.
We set $\xi=\bot$ or $\zeta=\bot$ when the corresponding boundary
contains no $C$. The stated bounds on $\xi,\zeta$ apply for every
$p,s$. The boundary weights are zero for values that are not allowed;
for example, $b_0(\xi;d)=0$ when $\xi\ge d$.

At every middle level the rows are $a,b$. Applying
\eqref{eq:color-chain} to the first and last $AB$-levels and the
intervening vertices places middle $A$'s in row $a$ and middle $B$'s
in row $b$. Thus the possible level labels, read from
lower to upper, are $AB,CB,AC$, with weights $1,wz^{-1},wz$.
Once the positions and rows of the middle $C$'s are chosen, all
other labels are determined. The vertices labelled $A$ form a chain,
as do those labelled $B$. We therefore need only check
\eqref{eq:color-chain} between consecutive occurrences of $C$. A middle position $x\in\{1,\ldots,M\}$ has level $p+1+x$.
There are four disjoint cases.

\smallskip\noindent\emph{No middle $C$.}
There is one labelling, with $AB$ at every middle level.
When both boundaries contain $C$, applying \eqref{eq:color-chain}
to the last lower boundary $C$ and the first upper boundary $C$
gives $0\le d-\xi-\zeta\le M+3$.
Since $k\ge2n-1$, $p+s\le4$, and $n\ge4$, we have
$M+3=k+2-p-s\ge2n-3\ge n-1$.
Together with $d\le n-1$ and $\xi,\zeta\ge0$, this gives
$d-\xi-\zeta\le d\le M+3$.
Thus only the lower bound remains, namely $\xi+\zeta\le d$.
For fixed boundary parameters $\xi,\zeta$, the weight sum
for middle labellings with no $C$ is therefore
\[
\eta_d(\xi,\zeta)=
\begin{cases}
1,&\xi=\bot\text{ or }\zeta=\bot,\\
\ind{\xi+\zeta\le d},&\xi,\zeta\ne\bot.
\end{cases}
\]
The case $\xi=\zeta=\bot$ has total $w$-degree zero and therefore
does not contribute to the required coefficients of $w^2,w^3,w^4$.

\smallskip\noindent\emph{All $h>0$ middle $C$'s in row $a$.}
If the lower boundary contains $C$, \eqref{eq:color-chain} between
its last $C$ and the middle $C$'s requires $a+\xi\le a$, hence $\xi=0$. The condition is also
sufficient, since the compared vertices then lie in the same row. Put
$\chi(\xi)=\ind{\xi\in\{\bot,0\}}$.
If the upper boundary contains $C$, \eqref{eq:color-chain} between
a middle $C$ at position $x$ and the first upper boundary $C$ gives
$0\le d-\zeta\le M+2-x$.
For $0\le\zeta\le d$ this excludes exactly the last
$\max(0,d-\zeta-2)$ positions; for $\zeta>d$ it is impossible.
If the upper boundary contains no $C$, all positions are allowed. Set
\[
v_h(\zeta)=
\begin{cases}
\binom Mh,&\zeta=\bot,\\[1mm]
\displaystyle\binom{M-\max(0,d-\zeta-2)}h,&0\le\zeta\le d,\\[1mm]
0,&\zeta>d.
\end{cases}
\]
The bound $M-d+1\ge0$ ensures that the excluded positions lie within
the middle. The middle $C$'s are comparable because they lie in the
same row. Thus the stated boundary comparisons suffice. The weight sum for these middle labellings is
$w^hz^{-h}\chi(\xi)v_h(\zeta)$.

\smallskip\noindent\emph{All $h>0$ middle $C$'s in row $b$.}
By \eqref{eq:color-chain}, we need $\zeta\in\{\bot,0\}$ and,
when $\xi\ne\bot$, $0\le d-\xi\le x+1$.
Reversing the middle positions by $x\mapsto M+1-x$ gives
the same position count $v_h(\xi)$ as in the case with all middle $C$'s in row $a$.
Each middle $C$ is paired with $A$, so the weight sum for
these middle labellings is $w^hz^hv_h(\xi)\chi(\zeta)$.

\smallskip\noindent\emph{Both middle rows contain $C$.}
Write the selected positions as $x_1<\cdots<x_h$.
By \eqref{eq:color-chain}, the rows along $C$ cannot decrease. Thus the first $\ell$ occurrences
lie in row $a$ and the remaining $h-\ell$ in row $b$, for some
$1\le\ell<h$. The middle vertices form a chain if and only if
$x_{\ell+1}-x_\ell\ge d$.
For fixed $\ell$, no placement exists when $M<h+d-1$.
Otherwise, the $M-h$ unselected levels all have labels $AB$.
First place $d-1$ of them between occurrences $\ell$ and $\ell+1$.
The remaining $M-h-d+1$ levels can be distributed among the $h+1$
gaps before, between, and after the selected levels. The number of such distributions is
$\binom{M-d+1}{h}$. The binomial coefficient is also
zero when $M<h+d-1$. Each choice of gap lengths determines exactly
one set of positions, and conversely. There is no $\binom h\ell$
factor: for fixed $\ell$, the row of each occurrence is determined.
The boundary comparisons require $\chi(\xi)\chi(\zeta)=1$.
When present, the last lower boundary $C$ lies in row $a$, and the
first upper boundary $C$ lies in row $b$. Thus the remaining
comparisons along $C$ hold. The weight for fixed
$h,\ell$ is therefore
$\chi(\xi)\chi(\zeta)\binom{M-d+1}{h}w^hz^{h-2\ell}$.

Adding the four cases gives the middle weight. Boundary vertices
are not included, and only terms of $w$-degree at most four are needed:
\begin{equation}\label{eq:central-weight}
\begin{aligned}
W_{M,d}(\xi,\zeta)={}&\eta_d(\xi,\zeta)
 +\sum_{h=1}^{4}w^h\bigl(z^{-h}\chi(\xi)v_h(\zeta)
                         +z^hv_h(\xi)\chi(\zeta)\bigr)\\
&+\chi(\xi)\chi(\zeta)\sum_{h=2}^{4}w^h
  \binom{M-d+1}{h}\sum_{\ell=1}^{h-1}z^{h-2\ell}.
\end{aligned}
\end{equation}
The different powers of $z$ in the sum over $\ell$ record the
different chain sizes and must be kept. For fixed parameters, the sum of the weights of all compatible labellings is
\[
b_p(\xi;d)\,\overline{b_s(\zeta;d)}\,W_{M,d}(\xi,\zeta);
\]
the first and last $AB$-levels have weight $1$.
Every labelling with $A$ below $B$ determines $p,s,a,d$ from its
first and last $AB$-levels. The last lower boundary $C$ and the first upper boundary
$C$, or their absence, determine $\xi,\zeta$.
Conversely, joining any boundaries and middle labelling counted in Steps~4 and~5
gives row data satisfying \eqref{eq:interlace}. The comparisons already checked
ensure, by transitivity, that each label gives a chain. Every internal
boundary level contains $C$, so the chosen $AB$-levels are the first
and last $AB$-levels of the resulting labelling. Thus each such
labelling is counted exactly once. By Step~2, the row data determine
$\Upsilon$ uniquely, so no further factor is needed for the paths.

To select the prescribed chain sizes, put
$e_j=\mu_{j,1}-\mu_{j,2}$ and $c_j=\mu_{j,3}$.
In the order of \cref{tab:six}, their pairs are
$(0,4),(2,4),(2,2),(1,3),(0,2),(3,3)$.
There are $2k+4$ remaining vertices, so specifying
$\#C=c_j$ and $\#A-\#B=\pm e_j$ fixes the other two sizes as
$(2k+4-c_j\pm e_j)/2$.
Our convention places $A$ below $B$. The positive sign assigns
sizes $\mu_{j,1},\mu_{j,2}$ to the lower and upper chains,
respectively; the negative sign reverses the assignments.
For $e_j=0$, both coefficient extractions must still be included,
even though they are equal. The two orders of distinct equal-sized
chains give two semiordered partitions. Both chain sizes are at least
$k-1\ge6$, whereas $c_j\le4$, so the chain $C$ requires no additional
ordering factor. We obtain
\begin{equation}\label{eq:direct-F}
\begin{aligned}
F_j(n,k)={}&\bigl([z^{e_j}w^{c_j}]+[z^{-e_j}w^{c_j}]\bigr)
 \sum_{d=1}^{n-1}\sum_{a=1}^{n-d}
 \sum_{\substack{p,s\ge0\\p+s\le4}}\\
&\quad\sum_{\xi\in\{\bot,0,\ldots,n-a\}}
 \sum_{\zeta\in\{\bot,0,\ldots,a+d-1\}}
 b_p(\xi;d)\,\overline{b_s(\zeta;d)}\,
 W_{k-1-p-s,d}(\xi,\zeta).
\end{aligned}
\end{equation}
If a term with $h$ middle occurrences of $C$ contributes to
\eqref{eq:direct-F}, then $p+s+h\le c_j\le4$, hence
$M-d+1\ge n-p-s\ge4-p-s\ge h$; the upper arguments of the binomial
coefficients in $v_h$ are also at least $M-d+1$. Thus the position
counts are valid even for the smallest allowed parameters $n=4,k=7$.
All factors have nonnegative $w$-degree, so discarding terms of
degree at least five at any stage cannot change the required
coefficients. Step~6 evaluates the row sums and coefficients in
\eqref{eq:direct-F}.
\par\smallskip\noindent\emph{Step 6. Evaluating the six counts.}
We first sum over the row parameters in the four cases of Step~5.
We then extract the coefficients that give the prescribed chain sizes.
We obtain formulas for $d\ge5$ and then compute the corrections at
$d=1,2,3,4$.
We use the boundary polynomials from Step~4 and the middle weights from
Step~5. Throughout, $M=k-1-p-s$, and a bar replaces $z$ by $z^{-1}$.

\smallskip\noindent\emph{6.1. Summing over the row parameters.}
First suppose the middle has no $C$. There are three disjoint cases
in which at least one boundary has no $C$: neither boundary contains
$C$, only the upper boundary contains $C$, or only the lower boundary
contains $C$. Their weights at fixed $a$ are, respectively,
$N_p\overline{N_s}$,
$N_p(d\overline{R_s}+\overline{D_s}+\overline{U_s}+(a-1)\overline{V_s})$,
and $(dR_p+D_p+U_p+(n-a-d)V_p)\overline{N_s}$.
The values of $\xi$ or $\zeta$ other than $\bot$ are grouped as
$0$, $1,\ldots,d-1$, $d$, and $>d$. The extra weight $D$ occurs
only at zero.
If both boundaries have $C$, the condition is $\xi+\zeta\le d$.
There are $\binom{d+2}{2}-2$ pairs satisfying
$0\le\xi,\zeta<d$ and $\xi+\zeta\le d$.
If either parameter is zero, the other has $d$ possible values in this range.
The two remaining pairs are $(d,0),(0,d)$. Finally,
$\sum_{a=1}^{n-d}(a-1)=\sum_{a=1}^{n-d}(n-a-d)
=(n-d)(n-d-1)/2$.
After summing over $a,\xi,\zeta$ and dividing by $n-d$, the weight
for the case with no middle $C$ is
\begin{equation}\label{eq:row-none}
\begin{aligned}
P^0_{p,s}={}&N_p\overline{N_s}+D_p\overline{D_s}
 +N_p\left(d\overline{R_s}+\overline{D_s}+\overline{U_s}
                  +\frac{n-d-1}{2}\overline{V_s}\right)\\
&+\left(dR_p+D_p+U_p+\frac{n-d-1}{2}V_p\right)\overline{N_s}
 +\left(\binom{d+2}{2}-2\right)R_p\overline{R_s}\\
&+d(D_p\overline{R_s}+R_p\overline{D_s})
 +U_p(\overline{R_s}+\overline{D_s})+(R_p+D_p)\overline{U_s}.
\end{aligned}
\end{equation}
The count $\binom{d+2}{2}-2$ excludes $\bot$. It is also valid at
$d=1$, when $(0,0)$ is the only pair with $0\le\xi,\zeta<d$.

If all $h>0$ middle $C$'s lie in row $a$, the middle factor is
$w^hz^{-h}\chi(\xi)v_h(\zeta)$. Summing over $\xi$ gives $N_p+R_p+D_p$.
For the upper boundary, the values $\zeta=\bot$, $\zeta=0$,
$1\le\zeta<d$, $\zeta=d$, and $\zeta>d$ give, respectively,
$\overline{N_s}\binom Mh$,
$(\overline{R_s}+\overline{D_s})v_h(0)$,
$\overline{R_s}\sum_{\xi=1}^{d-1}v_h(\xi)$,
$\overline{U_s}\binom Mh$, and zero. For the sum of the $R_s$ terms, Pascal's
identity gives
\begin{equation}\label{eq:Sh-row}
\sum_{\xi=0}^{d-1}v_h(\xi)
=\binom Mh+\binom{M+1}{h+1}-\binom{M-d+2}{h+1}.
\end{equation}
Indeed, for $d\ge2$ the sum is
$\binom Mh+\sum_{t=M-d+2}^{M}\binom th$; for $d=1$ the last
two terms in \eqref{eq:Sh-row} cancel. The summand is independent
of $a$, so summing over $a$ gives a factor of $n-d$.
After dividing by $n-d$, the weight for this case is
\begin{equation}\label{eq:row-a}
P^a_{h,p,s}=w^hz^{-h}(N_p+R_p+D_p)
 \left[(\overline{N_s}+\overline{U_s})\binom Mh
 +\overline{R_s}\sum_{\xi=0}^{d-1}v_h(\xi)+\overline{D_s}v_h(0)\right].
\end{equation}
The sums over the two boundary parameters can be taken in either
order. Their restrictions differ because the middle $C$'s lie in
row $a$. Exchanging $p,s$ and replacing $z$ by $z^{-1}$ in \eqref{eq:row-a}
gives the corresponding weight, also divided by $n-d$, when all
middle $C$'s lie in row $b$:
\begin{equation}\label{eq:row-b}
P^b_{h,p,s}=w^hz^h
 \left[(N_p+U_p)\binom Mh+R_p\sum_{\xi=0}^{d-1}v_h(\xi)+D_pv_h(0)\right]
 (\overline{N_s}+\overline{R_s}+\overline{D_s}).
\end{equation}
The reflection identity is
\[
P^b_{h,p,s}(z,w)=P^a_{h,s,p}(z^{-1},w).
\]
After summing over $p,s$, the cases with all middle $C$'s in row $a$
or all in row $b$ give the same sum of the two required coefficients.
Indeed, reflection interchanges $p,s$ and replaces $z$ by $z^{-1}$.
It preserves the range of summation and exchanges the two coefficient
extractions. The equality need not hold at one fixed ordered pair $(p,s)$.
When both middle rows contain $C$, both boundary sums are restricted
by $\chi$, and the position count from Step~5 yields, for $h\ge2$,
\begin{equation}\label{eq:row-both}
P^{ab}_{h,p,s}=(N_p+R_p+D_p)
 (\overline{N_s}+\overline{R_s}+\overline{D_s})
 w^h\binom{M-d+1}{h}\sum_{\ell=1}^{h-1}z^{h-2\ell}.
\end{equation}
Each of the four weights includes both boundaries and has been divided
by $n-d$. With the parameter ranges of \eqref{eq:direct-F}, we therefore have
\begin{equation}\label{eq:row-bridge}
\begin{aligned}
\sum_{a=1}^{n-d}\sum_\xi\sum_\zeta
 b_p(\xi;d)\overline{b_s(\zeta;d)}W_{M,d}(\xi,\zeta)
&\equiv(n-d)\Bigl[P^0_{p,s}
 +\sum_{h=1}^4(P^a_{h,p,s}+P^b_{h,p,s})\\
&\qquad +\sum_{h=2}^4P^{ab}_{h,p,s}\Bigr]\pmod{w^5}.
\end{aligned}
\end{equation}

\smallskip\noindent\emph{6.2. The generic polynomials $B_j$.}
For a fixed gap $d$, \eqref{eq:row-bridge} gives the weight sum
after summing over the row parameters and dividing by $n-d$.
We apply the two coefficient extractions in \eqref{eq:direct-F}
and sum over the boundary lengths and middle cases.
Here $h=0$ when the middle has no $C$. Each internal boundary
level contains one $C$, so only $p+s+h\le c_j$ can contribute.
The remaining $c_j-p-s-h$ occurrences must lie at the two singleton
levels; a term requiring more than two has zero coefficient.
In each summand, $M=k-1-p-s$.

For $d\ge5$ and $1\le p\le4$, the substitutions in \eqref{eq:Gamma} are
$H_p(d)=0$, $\min(d,p+1)=p+1$, and
$\min(d-1,p+1-\nu)=p+1-\nu$ for $1\le\nu\le p$.
We call the resulting boundary polynomials generic and write
$U_p^{\mathrm g}=\Gamma_p^{\mathrm g}$ and
$V_p^{\mathrm g}=\Gamma_p^{\mathrm g}+w^pz^{p-1}$;
$N_p,R_p,D_p$ and all singleton polynomials are unchanged.
Also $v_h(0)=\binom{M-d+2}h$ in this range. In every weight, we
use \eqref{eq:Sh-row} for the sum $\sum_{\xi=0}^{d-1}v_h(\xi)$
and $\binom{M-d+2}h$ for the separately occurring factor $v_h(0)$.
With these substitutions, \eqref{eq:direct-F} and
\eqref{eq:row-bridge} give the following finite formula for $B_j$:
\begin{equation}\label{eq:B-generic-evaluation}
\begin{aligned}
B_j={}&\bigl([z^{e_j}w^{c_j}]+[z^{-e_j}w^{c_j}]\bigr)\Biggl[
 \sum_{\substack{p,s\ge0\\p+s\le c_j}}P^0_{p,s}
 +2\sum_{\substack{h\ge1,\ p,s\ge0\\h+p+s\le c_j}}P^a_{h,p,s}
&+\sum_{\substack{h\ge2,\ p,s\ge0\\h+p+s\le c_j}}P^{ab}_{h,p,s}\Biggr].
\end{aligned}
\end{equation}
All indices are integers, and each ordered pair $(p,s)$ is used once.
As in \eqref{eq:direct-F}, both coefficient extractions are retained
when $e_j=0$. The factor $2$ accounts for the equal contributions
from the cases with all middle $C$'s in row $a$ or all in row $b$,
after summing over $p,s$ and adding the two required coefficients.
Expanding the resulting binomial expressions gives
\eqref{eq:B1}--\eqref{eq:B6}; the full evaluations are given in
Subsections~\ref{app:B1}--\ref{app:B6}.

Equations~\eqref{eq:row-bridge} and~\eqref{eq:B-generic-evaluation} show that each gap $d$ with
$5\le d\le n-1$ contributes $(n-d)B_j(n,k,d)$ to $F_j$. The finite sums define $B_j$ as a polynomial
in $n,k,d$. At an admissible gap $d\le4$, its value need not equal
the actual contribution divided by $n-d$; Step~6.3 computes the
difference. The polynomial definition also applies when $n=4,5$,
although no admissible gap then satisfies $d\ge5$.

\smallskip\noindent\emph{6.3. Small-gap corrections.}
The generic formulas give the actual contributions for $d\ge5$.
We now compare them with the actual contributions at
$d=1,2,3,4$. We will show that the correction at $d=4$ is zero,
so only $d=1,2,3$ require correction terms.
For $1\le r\le\min(4,n-1)$, the correction $\varepsilon_{j,r}$
is the actual contribution at gap $r$, including its row factor $n-r$,
minus $(n-r)B_j(n,k,r)$. Both terms use the same gap. They differ only in the boundary
polynomials and the factor $v_h(0)$ that occurs separately in the weights.
Write $\Delta_rU_p=U_p(r)-U_p^{\mathrm g}$ and similarly for $V_p$.
The polynomials $N_p,R_p,D_p$ are unchanged. For $p\ge1$, use
$H_p(r)=\max(0,p+1-r)$ from Step~4. Subtraction in \eqref{eq:Gamma} gives
\begin{equation}\label{eq:delta-Gamma}
\Delta_rV_p=-H_p(r)w^pz^{p-1}
 +\sum_{\nu=1}^{H_p(r)}
 \left[\binom{H_p(r)}\nu\Psi_{p,\nu}
                  -(H_p(r)+1-\nu)\Phi_{p,\nu}\right].
\end{equation}
The remaining polynomial difference is
$\Delta_rU_p=\Delta_rV_p+\ind{r=1}
\sum_{\nu=1}^{p-1}w^pz^{p-2\nu+1}$.
The nonzero values of these differences modulo $w^5$ are listed in
\cref{tab:profile-changes-one,tab:profile-changes-two-three}
in Appendix~\ref{app:small-gap-corrections}.

The formula for $\sum_{\xi=0}^{r-1}v_h(\xi)$ in \eqref{eq:Sh-row}
is valid for every $r\ge1$, so this entire sum has no correction.
For the factor $v_h(0)$, the difference between its actual value
and the generic formula is
\begin{equation}\label{eq:delta-v0}
\binom{M-\max(0,r-2)}h-\binom{M-r+2}h
=-\ind{r=1}\binom M{h-1}.
\end{equation}
For each of the four middle cases, let $\Delta_rP$ be the difference
between the weights obtained from the actual polynomials and $v_h(0)$
and from their generic formulas, both evaluated at $d=r$.
Using \eqref{eq:delta-Gamma} and \eqref{eq:delta-v0}, together with the formula
for $\Delta_rU_p$, in \eqref{eq:row-none}, \eqref{eq:row-a},
\eqref{eq:row-b}, and \eqref{eq:row-both} gives
\begin{align}
\Delta_rP^0_{p,s}
={}&\Delta_rU_p(\overline{N_s}+\overline{R_s}+\overline{D_s})
 +(N_p+R_p+D_p)\overline{\Delta_rU_s}+\frac{n-r-1}{2}
 (\Delta_rV_p\overline{N_s}+N_p\overline{\Delta_rV_s}),
\label{eq:delta-none}\\
\Delta_rP^a_{h,p,s}
={}&w^hz^{-h}(N_p+R_p+D_p)
 \left[\binom Mh\overline{\Delta_rU_s}
             -\ind{r=1}\binom M{h-1}\overline{D_s}\right],
\label{eq:delta-a}\\
\Delta_rP^b_{h,p,s}
={}&\overline{\Delta_rP^a_{h,s,p}},\qquad
\Delta_rP^{ab}_{h,p,s}=0.
\label{eq:delta-b-both}
\end{align}
The correction is therefore obtained by applying the coefficient
extraction and index ranges of \eqref{eq:B-generic-evaluation} to
these weight differences and multiplying by $n-r$. Equivalently,
\begin{equation}\label{eq:correction-definition}
\varepsilon_{j,r}
 =\bigl(\text{actual gap-$r$ contribution}\bigr)-(n-r)B_j(n,k,r).
\end{equation}
For $r=1,2,3$, Appendix~\ref{app:small-gap-corrections} evaluates
these coefficient extractions. By \eqref{eq:delta-b-both}, the
cases with all middle $C$'s in row $a$ or all in row $b$ give equal
sums after summing over $p,s$ and adding the two required coefficients.
The case with middle $C$'s in both rows gives zero.
Thus we add the sum for no middle $C$ and twice the sum for row $a$,
then multiply by $n-r$. This gives the entries of
\cref{tab:corrections}.

Finally, at $r=4$ all polynomials with $p\le3$ agree with their generic formulas.
For $p=4$, $H_4(4)=1$ and the entire difference in \eqref{eq:delta-Gamma}
is $-w^4z^3+\Psi_{4,1}-\Phi_{4,1}=0$.
The extra term in $\Delta_rU_p$ is zero at $r=4$, and the difference
in \eqref{eq:delta-v0} is also zero;
hence all four weight differences vanish before coefficient extraction.
Thus $\varepsilon_{j,4}=0$ for every $j$. When $n=4$, gap four is
outside the sum, so this identity adds no extra contribution.
The exceptional gaps $1,2,3$ are present for all $n\ge4$.
Summing \eqref{eq:correction-definition} over the admissible gaps
proves \eqref{eq:gap-count}, and \eqref{eq:factor-yang} then proves
\eqref{eq:counts-prop}.
\end{proof}
\Needspace{6\baselineskip}
\begin{proof}[Proof of \cref{thm:main}]
By \cref{cor:WW-poset,lem:force,prop:counts} and the signs in
\cref{tab:six},
\begin{equation}\label{eq:signed-six}
[s_{\rho}]X_{\inc(\bchain m\times\bchain n)}
=(n-2)!\,(F_1-F_2-F_3-F_4+F_5+F_6).
\end{equation}
Substituting \eqref{eq:gap-count} and combining the corrections in
\cref{tab:corrections} gives
\begin{equation}\label{eq:main-substitution}
\begin{aligned}
[s_{\rho}]X_{\inc(\bchain m\times\bchain n)}
={}&(n-2)!\Biggl\{\sum_{d=1}^{n-1}(n-d)(B_1-B_2-B_3-B_4+B_5+B_6)\\
&\quad +(n-1)(k^2+5k+6n-18)+(n-2)(2k+3n-5)+(n-3)(n-2)\Biggr\}.
\end{aligned}
\end{equation}
Here each $B_j$ in the sum is evaluated at $(n,k,d)$.
Their signed combination has degree at most four in $d$.
Substitute \eqref{eq:B1}--\eqref{eq:B6} and use
\[
\sum_{d=1}^{n-1}(n-d)d^\ell
=n\sum_{d=1}^{n-1}d^\ell-\sum_{d=1}^{n-1}d^{\ell+1}
\qquad(0\le\ell\le4).
\]
The standard power-sum formulas through degree five then yield
\begin{equation}\label{eq:main-evaluation}
\begin{aligned}
[s_{\rho}]X_{\inc(\bchain m\times\bchain n)}
={}&\frac{(n-2)!}{360}\Bigl\{(-20n^3+20n)k^3
 +(15n^4+30n^3-195n^2+510n-360)k^2\\
&\quad +(-6n^5-15n^4-100n^3+195n^2+2446n-3240)k\\
&\quad +n^6+3n^5+25n^4+45n^3+4294n^2-15168n+12240\Bigr\}.
\end{aligned}
\end{equation}
Equation~\eqref{eq:main-evaluation} proves the coefficient identity
in \eqref{eq:main-coeff}.

To determine the sign of the coefficient, set $u=n-4\ge0$ and $t=k-(2n-1)\ge0$.
Substituting $n=u+4$ and $k=2u+7+t$ into \eqref{eq:main-evaluation} gives
\begin{equation}\label{eq:negative}
\begin{aligned}
[s_{\rho}]X_{\inc(\bchain m\times\bchain n)}
={}&-\frac{(n-2)!}{360}\Bigl\{210240+408900u+300744u^2+110355u^3+22065u^4+2385u^5\\
&\quad +111u^6+(122640+174794u+95535u^2+25480u^3+3405u^4+186u^5)t\\
&\quad +(20880+22710u+9075u^2+1590u^3+105u^4)t^2\\
&\quad +(1200+940u+240u^2+20u^3)t^3\Bigr\}.
\end{aligned}
\end{equation}
Since $u,t\ge0$, every term inside the braces is nonnegative, and the
constant term is $210240>0$. The factor outside the braces is strictly negative.
Thus the Schur coefficient in \eqref{eq:main-coeff} is negative, and
$\bchain m\times\bchain n$ is not Schur positive.
\end{proof}

\begin{corollary}\label{cor:range}
The product $\bchain m\times\bchain n$ is not Schur positive if either
$n=2$ and $m\ge8$, or $n\ge3$ and $m\ge n+5$.
\end{corollary}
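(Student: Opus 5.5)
The corollary is a case split that assembles three existing results, so the plan is to cover the conjectured range \eqref{eq:conjectured-range} by citation plus \cref{thm:main}. No new coefficient computation is needed. The only work is to check that the parameter ranges of the three ingredients fit together without gaps.

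\emph{Case $n=2$, $m\ge8$, and case $n=3$, $m\ge8=n+5$.} Here I invoke \citet[Theorems~3.4 and~4.8]{WZ26}. These state that $\bchain m\times\bchain2$ is Schur positive exactly for $m\le7$, and that $\bchain m\times\bchain3$ is Schur positive exactly for $m\le6$. Hence both products fail Schur positivity for every $m\ge8$. For $n=3$ this covers the whole required range $m\ge n+5=8$ (in fact $m\ge7$ already works).

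\emph{Case $n\ge4$, $m\ge n+5$.} I split according to $m\le3n-2$ or $m\ge3n-1$.
\begin{itemize}
\item If $n+5\le m\le3n-2$, put $k=m-n$. Then $k\ge5$, and $m\le3n-2$ is equivalent to $k\le2n-2$, i.e.\ $n\ge(k+2)/2$. So \cref{thm:LQYZ} applies directly; this is exactly the range \eqref{eq:old-range}.
\item If $m\ge3n-1$, then $k=m-n\ge2n-1$. So \cref{thm:main} gives a strictly negative coefficient $[s_\rho]X_{\inc(\bchain m\times\bchain n)}$. Note that $3n-1\ge n+5$ once $n\ge3$, so this subcase lies inside the required range.
\end{itemize}
The two subcases together exhaust all $m\ge n+5$, since the integers $m\le3n-2$ and $m\ge3n-1$ are complementary. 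When $3n-2<n+5$ (i.e.\ $n\le3$), the first subcase is simply empty, which is harmless because we are assuming $n\ge4$ here.

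There is no real obstacle. The one point to verify carefully is the translation between the parametrizations $(k,n)$ and $(m,n)$, so that the boundary values $m=3n-2$ and $m=3n-1$ are each covered by exactly the intended result. The statement concerns only the two-case range \eqref{eq:conjectured-range}, not the literal first condition of \cref{conj:LQYZ}, so no claim about $m=n\ge8$ is needed.
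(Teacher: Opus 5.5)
Your proposal is correct and follows the paper's proof: the cases $n=2$ and $n=3$ with $m\ge8$ come from \citet{WZ26}. For $n\ge4$ you split at $k=m-n$, with $5\le k\le2n-2$ covered by \cref{thm:LQYZ} and $k\ge2n-1$ by \cref{thm:main}; your boundary checks between the $(m,n)$ and $(k,n)$ parametrizations are accurate.
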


\begin{proof}[Proof of \cref{cor:range}]
For $n\ge4$ and $k=m-n\ge5$, either $5\le k\le2n-2$, when \eqref{eq:old-range} applies, or $k\ge2n-1$, when \cref{thm:main} applies. The cases $n=2$, $m\ge8$, and $n=3$, $m\ge8$, are the cited results of \citet{WZ26}.
\end{proof}

\Needspace{6\baselineskip}
\appendix
\section{\texorpdfstring{Evaluation of the polynomials $B_j$}{Evaluation of the polynomials Bj}}\label{app:counts}\label{app:other-counts}

We evaluate \eqref{eq:B-generic-evaluation} using the generic boundary
polynomials defined in Step~6.2. Throughout this appendix, $U_p,V_p$
denote these polynomials. We set $M=k-1-p-s$ separately for each ordered
pair $(p,s)$. All contributions have been divided by $n-d$, so they add
directly to $B_j$. For $5\le d\le n-1$, the polynomials give the gap-$d$
contribution to $F_j$, divided by $n-d$. Evaluating the polynomials at
smaller gaps gives the comparison values used in Step~6.3.

The boundary inputs are
\[
N_0=z+z^{-1},\quad R_0=w,\quad D_0=U_0=V_0=0,
\]
and, for $1\le p\le4$,
\[
N_p=0,\quad R_p=w^pz^{-p-1}+w^{p+1}z^{-p},\quad
D_p=w^pz^{1-p},\quad V_p=U_p+w^pz^{p-1}.
\]
The generic substitution in \eqref{eq:Gamma} gives
\begin{align*}
U_1={}&w(z^2+2+z^{-2})+w^2(z+z^{-1}),\\*
U_2={}&w^2(z^3+3z+3z^{-1}+z^{-3})+w^3(z^2+3+z^{-2}),\\*
U_3={}&w^3(z^4+4z^2+4+3z^{-2}+z^{-4})
       +w^4(z^3+4z+3z^{-1}+z^{-3}),\\*
U_4\equiv{}&w^4(z^5+5z^3+5z+4z^{-1}+3z^{-3}+z^{-5})\pmod{w^5}.
\end{align*}
Terms of degree at least five cannot affect the required coefficients.
A bar replaces $z$ by $z^{-1}$ in every term, including the additional
term in $V_p$.

For target $(e_j,c_j)$, first take the coefficient of $w^{c_j}$.
Then add the coefficients of $z^{e_j}$ and $z^{-e_j}$ in the resulting
Laurent polynomial. Thus the coefficient column for $e_j=0$ already
counts the constant term twice. The index bounds
are $h+p+s\le c_j$, with $h=0$ in the no-middle class. The remaining
$c_j-h-p-s$ occurrences of $C$ must occupy the two singleton levels.
If more than two occurrences remain, the coefficient is zero. Each table contains all admissible cases,
including zeros, with the following conventions:
\begin{itemize}[itemsep=1pt,topsep=2pt]
\item In the no-middle block, only $p\le s$ is listed.
Equation~\eqref{eq:row-none} gives
$P^0_{s,p}(z,w)=P^0_{p,s}(z^{-1},w)$, so $(p,s)$ and $(s,p)$
give equal sums of the two required coefficients. The indicated
multiplicity is therefore $2$ when $p<s$ and $1$ when $p=s$.
\item In the single-$a$ block, all ordered triples are listed. Substitute
the displayed $M$ and add the coefficients. Equations~\eqref{eq:row-a}
and~\eqref{eq:row-b} give
$P^b_{h,p,s}(z,w)=P^a_{h,s,p}(z^{-1},w)$.
Thus the complete single-$b$ sum equals the single-$a$ sum,
and the single-$a$ total is used twice.
\item Each two-row entry sums all ordered pairs with the indicated
$h,p+s$ and all $1\le\ell<h$, and includes its position factor
$\binom{k-p-s-d}{h}$. Add these entries directly.
\end{itemize}

For example, at $(p,s)=(0,2)$ the first summand of
\eqref{eq:row-none} involving $N_0$ has $w$-degree at most three.
The degree-four coefficient therefore comes from the term with $R_0=w$:
\begin{align*}
[w^4]P^0_{0,2}
&=[w^3]\left[\left(\binom{d+2}{2}-2\right)\overline R_2
                         +d\overline D_2+\overline U_2\right]=\left(\binom{d+2}{2}-1\right)z^2+3+z^{-2}.
\end{align*}
The contributions to $B_1$ and $B_2$ are $6$ and
$\binom{d+2}{2}$, respectively, before multiplication by $2$
to include $(p,s)=(2,0)$.
The other no-middle entries follow by the same substitution.

For single-row entries we use \eqref{eq:Sh-row} and, in particular,
\[
\sum_{\xi=0}^{d-1}v_1(\xi)=dM-\binom{d-1}{2},
\qquad v_h(0)=\binom{M-d+2}{h}.
\]
Each subsection illustrates this extraction for one ordered triple.
For the two-row entries, the identity
$N_p+R_p+D_p=w^pz^{-p}(z+z^{-1}+w)$, valid also at $p=0$, reduces
\eqref{eq:row-both} to
\[
P^{ab}_{h,p,s}=w^{h+p+s}z^{s-p}(z+z^{-1}+w)^2
 \binom{k-p-s-d}{h}\sum_{\ell=1}^{h-1}z^{h-2\ell}.
\]
The last sum is $1$, $z+z^{-1}$, or $z^2+1+z^{-2}$ for
$h=2,3,4$, respectively. Substituting these sums into \eqref{eq:row-both} and expanding gives the grouped two-row entries. We now add the three blocks, with the
single-row factor of two, and expand their binomial coefficients.

\subsection{\texorpdfstring{Evaluation of $B_1$: $(e_1,c_1)=(0,4)$}{Evaluation of B1: (e,c)=(0,4)}}\label{app:B1}
The target is $2[z^0w^4]$. The no-middle class has $p+s\le4$;
the single-row classes have $1\le h\le4$ and $p+s\le4-h$;
the two-row class has $2\le h\le4$ and $p+s\le4-h$.
All boundary lengths are nonnegative, and $M=k-1-p-s$.

To illustrate the single-row entries, take $(h,p,s)=(1,0,2)$, so
$M=k-3$. The position factors in \eqref{eq:row-a} are
\[
\binom M1=M,\qquad
\sum_{\xi=0}^{d-1}v_1(\xi)=dM-\frac{(d-1)(d-2)}2,\qquad
v_1(0)=M-d+2.
\]
Substituting the boundary inputs gives
\begin{align*}
P^a_{1,0,2}={}&wz^{-1}(z+z^{-1}+w)\Bigl\{
 M\bigl[w^2(z^{-3}+3z^{-1}+3z+z^3)\\*
&\qquad +w^3(z^{-2}+3+z^2)\bigr]
 +(w^2z^3+w^3z^2)\left(dM-\frac{(d-1)(d-2)}2\right)\\*
&\qquad +w^2z(M-d+2)\Bigr\}.
\end{align*}
The $w^4$ coefficient is
\begin{align*}
[w^4]P^a_{1,0,2}={}&M(2z^2+7+7z^{-2}+2z^{-4})+\left(dM-\frac{(d-1)(d-2)}2\right)(2z^2+1)+(M-d+2).
\end{align*}
Taking the constant term twice and substituting $M=k-3$ gives
\begin{align*}
2[z^0w^4]P^a_{1,0,2}
 &=2\left[7M+dM-\frac{(d-1)(d-2)}2+(M-d+2)\right]\\*
 &=2(d+8)(k-3)-(d-1)(d-2)-2d+4.
\end{align*}

\Cref{tab:app-B1-data} records all cases. Its single-row entries already add the
two target extractions, and its two-row entries also include the
position factor and the sums over the ordered boundary lengths and
$\ell$ for each listed $(h,p+s)$.

\begin{table}[!htbp]\centering\small
\setlength{\tabcolsep}{3pt}
\renewcommand{\arraystretch}{1.12}
\caption{Coefficient data for $B_1$.}\label{tab:app-B1-data}
\smallskip
\begin{minipage}[t]{0.36\linewidth}\centering\vspace{0pt}
\emph{No middle $C$}\par\smallskip
\begin{tabular}{@{}ccl@{}}\toprule
$(p,s)$ & Mult. & Coefficient\\\midrule
$(0,0)$ & 1 & $0$\\*
$(0,1)$ & 2 & $0$\\*
$(0,2)$ & 2 & $6$\\*
$(0,3)$ & 2 & $-7d+7n+15$\\*
$(0,4)$ & 2 & $-9d+9n+9$\\*
$(1,1)$ & 1 & $d^{2}+3d+2$\\*
$(1,2)$ & 2 & $d^{2}+5d+16$\\*
$(1,3)$ & 2 & $2d+16$\\*
$(2,2)$ & 1 & $d^{2}+3d+16$\\\bottomrule
\end{tabular}
\par\medskip
\emph{Middle $C$ in both rows}\par\smallskip
\begin{tabular}{@{}cl@{}}\toprule
$(h,p+s)$ & Contribution\\\midrule
$(2,0)$ & $2\binom{k-d}{2}$\\*
$(2,1)$ & $8\binom{k-d-1}{2}$\\*
$(2,2)$ & $8\binom{k-d-2}{2}$\\*
$(3,0)$ & $8\binom{k-d}{3}$\\*
$(3,1)$ & $12\binom{k-d-1}{3}$\\*
$(4,0)$ & $8\binom{k-d}{4}$\\\bottomrule
\end{tabular}
\end{minipage}\hfill
\begin{minipage}[t]{0.62\linewidth}\centering\vspace{0pt}
\emph{Middle $C$ only in row $a$}\par\smallskip
\begin{tabular}{@{}ccl@{}}\toprule
$(h,p,s)$ & $M$ & Coefficient\\\midrule
$(1,0,0)$ & $k-1$ & $0$\\*
$(1,0,1)$ & $k-2$ & $(2d+2)M-2\binom{d-1}{2}$\\*
$(1,0,2)$ & $k-3$ & $(2d+16)M-2\binom{d-1}{2}-2d+4$\\*
$(1,0,3)$ & $k-4$ & $16M-2d+4$\\*
$(1,1,0)$ & $k-2$ & $0$\\*
$(1,1,1)$ & $k-3$ & $(4d+4)M-4\binom{d-1}{2}$\\*
$(1,1,2)$ & $k-4$ & $(2d+10)M-2\binom{d-1}{2}-2d+4$\\*
$(1,2,0)$ & $k-3$ & $0$\\*
$(1,2,1)$ & $k-4$ & $(2d+2)M-2\binom{d-1}{2}$\\*
$(1,3,0)$ & $k-4$ & $0$\\*
$(2,0,0)$ & $k-1$ & $0$\\*
$(2,0,1)$ & $k-2$ & $8\binom{M}{2}+4\binom{M+1}{3}-4\binom{M-d+2}{3}$\\*
$(2,0,2)$ & $k-3$ & $10\binom{M}{2}+2\binom{M+1}{3}-2\binom{M-d+2}{3}+2\binom{M-d+2}{2}$\\*
$(2,1,0)$ & $k-2$ & $0$\\*
$(2,1,1)$ & $k-3$ & $4\binom{M}{2}+2\binom{M+1}{3}-2\binom{M-d+2}{3}$\\*
$(2,2,0)$ & $k-3$ & $0$\\*
$(3,0,0)$ & $k-1$ & $0$\\*
$(3,0,1)$ & $k-2$ & $4\binom{M}{3}+2\binom{M+1}{4}-2\binom{M-d+2}{4}$\\*
$(3,1,0)$ & $k-2$ & $0$\\*
$(4,0,0)$ & $k-1$ & $0$\\\bottomrule
\end{tabular}
\end{minipage}
\end{table}

\smallskip\noindent\emph{No middle $C$.}
Using the listed multiplicities, the no-middle-$C$ entries in
\cref{tab:app-B1-data} sum to
\[
4d^2-12d+32n+142.
\]

\smallskip\noindent\emph{All middle $C$ in row $a$.}
For each entry in the single-$a$ block of \cref{tab:app-B1-data}, substitute its listed
value of $M$. Summing these entries gives
\begin{align*}
&-\frac{d^4}{12}+\frac{d^3(2k+5)}6
 -\frac{d^2(6k^2+30k-1)}{12}\qquad+\frac{d(2k^3+15k^2-k-89)}6
 +\frac{k^3+12k^2+23k-288}3.
\end{align*}
By \eqref{eq:row-a} and \eqref{eq:row-b}, the total for row $b$
is the same.

\smallskip\noindent\emph{Middle $C$ in both rows.}
Adding the grouped two-row entries in \cref{tab:app-B1-data}, with their position factors
already included, gives
\[
\frac{(k-d)^4}{3}+\frac{4(k-d)^3}{3}-\frac{10(k-d)^2}{3}+\frac{31d}{3}-\frac{31k}{3}+20.
\]

Finally, \eqref{eq:B-generic-evaluation} adds the no-middle-$C$ total,
twice the single-$a$ total, and the two-row total. This gives
\begin{align*}
6B_1={}&d^4-2d^3(2k-1)+d^2(6k^2-6k+5)\\
&-2d(2k^3-3k^2-19k+94)
 +2(k^4+6k^3+14k^2+15k+96n-90),
\end{align*}
which is \eqref{eq:B1}.

\subsection{\texorpdfstring{Evaluation of $B_2$: $(e_2,c_2)=(2,4)$}{Evaluation of B2: (e,c)=(2,4)}}\label{app:B2}
For every weight, add its $z^2w^4$ and $z^{-2}w^4$ coefficients.
The ranges are $p,s\ge0$ with $p+s\le4$ when there is no middle $C$;
$1\le h\le4$ and $h+p+s\le4$ for a single middle row; and
$2\le h\le4$ and $h+p+s\le4$ when both middle rows contain $C$.
We use \eqref{eq:row-none}, \eqref{eq:row-a}, and \eqref{eq:row-both},
with $M=k-1-p-s$.

For example, take $(h,p,s)=(2,0,1)$ in the
single-$a$ class, so $M=k-2$. Substitution in \eqref{eq:row-a} gives
\begin{align*}
P^a_{2,0,1}={}&w^2z^{-2}(z+z^{-1}+w)\Biggl\{
 \bigl[w(z^{-2}+2+z^2)+w^2(z^{-1}+z)\bigr]\binom M2\\*
&\qquad +(wz^2+w^2z)\sum_{\xi=0}^{d-1}v_2(\xi)
               +wv_2(0)\Biggr\}.
\end{align*}
Its complete $w^4$ coefficient is
\begin{align*}
[w^4]P^a_{2,0,1}
 ={}&(2+4z^{-2}+2z^{-4})\binom M2 +(2+z^{-2})\sum_{\xi=0}^{d-1}v_2(\xi)+z^{-2}v_2(0).
\end{align*}
The $z^2$ coefficient is zero, while the $z^{-2}$ coefficient is
\[
4\binom M2+\sum_{\xi=0}^{d-1}v_2(\xi)+v_2(0).
\]
For $M=k-2$, the three position factors are
\begin{align*}
\binom M2&=\binom{k-2}{2}, \quad \sum_{\xi=0}^{d-1}v_2(\xi)=\binom{k-2}{2}+\binom{k-1}{3}-\binom{k-d}{3}, \quad v_2(0)=\binom{k-d}{2}.
\end{align*}
Thus the sum of the two target coefficients is
\[
5\binom{k-2}{2}+\binom{k-1}{3}-\binom{k-d}{3}+\binom{k-d}{2}.
\]
\Cref{tab:app-B2-data} lists the coefficients for all allowed parameters.
For the case with no middle $C$, it lists only $p\le s$, with
multiplicity $2$ when $p<s$ and $1$ when $p=s$, as follows from \eqref{eq:row-none}. For the case with middle $C$ only in row $a$, it lists
all twenty ordered triples and their values of $M$. The two-row entries are grouped by $(h,p+s)$;
each already includes all ordered boundary pairs, all divisions
between rows $a,b$, and the position factor $\binom{k-d-p-s}{h}$.

\begin{table}[!htbp]\centering\small
\setlength{\tabcolsep}{3pt}
\renewcommand{\arraystretch}{1.12}
\caption{Coefficient data for $B_2$.}\label{tab:app-B2-data}
\smallskip
\begin{minipage}[t]{0.36\linewidth}\centering\vspace{0pt}
\emph{No middle $C$}\par\smallskip
\begin{tabular}{@{}ccl@{}}\toprule
$(p,s)$ & Mult. & Coefficient\\\midrule
$(0,0)$ & 1 & $0$\\*
$(0,1)$ & 2 & $0$\\*
$(0,2)$ & 2 & $\frac{d^{2}+3d+2}{2}$\\*
$(0,3)$ & 2 & $\frac{-5d+9n+23}{2}$\\*
$(0,4)$ & 2 & $-9d+9n+9$\\*
$(1,1)$ & 1 & $2$\\*
$(1,2)$ & 2 & $\frac{d^{2}+5d+24}{2}$\\*
$(1,3)$ & 2 & $\frac{d^{2}+3d+30}{2}$\\*
$(2,2)$ & 1 & $2d+14$\\\bottomrule
\end{tabular}
\par\medskip
\emph{Middle $C$ in both rows}\par\smallskip
\begin{tabular}{@{}cl@{}}\toprule
$(h,p+s)$ & Contribution\\\midrule
$(2,0)$ & $0$\\*
$(2,1)$ & $4\binom{k-d-1}{2}$\\*
$(2,2)$ & $6\binom{k-d-2}{2}$\\*
$(3,0)$ & $4\binom{k-d}{3}$\\*
$(3,1)$ & $8\binom{k-d-1}{3}$\\*
$(4,0)$ & $6\binom{k-d}{4}$\\\bottomrule
\end{tabular}
\end{minipage}\hfill
\begin{minipage}[t]{0.62\linewidth}\centering\vspace{0pt}
\emph{Middle $C$ only in row $a$}\par\smallskip
\begin{tabular}{@{}ccl@{}}\toprule
$(h,p,s)$ & $M$ & Coefficient\\\midrule
$(1,0,0)$ & $k-1$ & $0$\\*
$(1,0,1)$ & $k-2$ & $M$\\*
$(1,0,2)$ & $k-3$ & $(2d+9)M-2\binom{d-1}{2}$\\*
$(1,0,3)$ & $k-4$ & $(d+13)M-\binom{d-1}{2}-d+2$\\*
$(1,1,0)$ & $k-2$ & $dM-\binom{d-1}{2}$\\*
$(1,1,1)$ & $k-3$ & $(d+5)M-\binom{d-1}{2}-d+2$\\*
$(1,1,2)$ & $k-4$ & $(d+8)M-\binom{d-1}{2}-d+2$\\*
$(1,2,0)$ & $k-3$ & $(d+1)M-\binom{d-1}{2}$\\*
$(1,2,1)$ & $k-4$ & $(d+4)M-\binom{d-1}{2}-d+2$\\*
$(1,3,0)$ & $k-4$ & $M$\\*
$(2,0,0)$ & $k-1$ & $\binom{M}{2}+\binom{M+1}{3}-\binom{M-d+2}{3}$\\*
$(2,0,1)$ & $k-2$ & $5\binom{M}{2}+\binom{M+1}{3}-\binom{M-d+2}{3}+\binom{M-d+2}{2}$\\*
$(2,0,2)$ & $k-3$ & $8\binom{M}{2}+\binom{M+1}{3}-\binom{M-d+2}{3}+\binom{M-d+2}{2}$\\*
$(2,1,0)$ & $k-2$ & $2\binom{M}{2}+\binom{M+1}{3}-\binom{M-d+2}{3}$\\*
$(2,1,1)$ & $k-3$ & $4\binom{M}{2}+\binom{M+1}{3}-\binom{M-d+2}{3}+\binom{M-d+2}{2}$\\*
$(2,2,0)$ & $k-3$ & $\binom{M}{2}$\\*
$(3,0,0)$ & $k-1$ & $2\binom{M}{3}+\binom{M+1}{4}-\binom{M-d+2}{4}$\\*
$(3,0,1)$ & $k-2$ & $4\binom{M}{3}+\binom{M+1}{4}-\binom{M-d+2}{4}+\binom{M-d+2}{3}$\\*
$(3,1,0)$ & $k-2$ & $\binom{M}{3}$\\*
$(4,0,0)$ & $k-1$ & $\binom{M}{4}$\\\bottomrule
\end{tabular}
\end{minipage}
\end{table}

\smallskip\noindent\emph{No middle $C$.}
Using the listed multiplicities, the no-middle-$C$ entries in
\cref{tab:app-B2-data} sum to
\[
3d^2-10d+27n+113.
\]

\smallskip\noindent\emph{All middle $C$ in row $a$.}
For each entry in the single-$a$ block of \cref{tab:app-B2-data}, substitute its listed
value of $M$. Summing these entries gives
\begin{align*}
&-\frac{d^4}{12}+\frac{d^3(k+1)}3
 -\frac{d^2(6k^2+12k+5)}{12}\\
&\qquad+\frac{d(2k^3+6k^2+5k-65)}6
 +\frac{k^4+14k^3+59k^2+94k-1848}{24}.
\end{align*}
By \eqref{eq:row-a} and \eqref{eq:row-b}, the total for row $b$
is the same.

\smallskip\noindent\emph{Middle $C$ in both rows.}
Adding the grouped two-row entries in \cref{tab:app-B2-data}, with their position factors
already included, gives
\[
\frac{(k-d)^4}{4}+\frac{(k-d)^3}{2}-\frac{9(k-d)^2}{4}+\frac{13d}{2}-\frac{13k}{2}+14.
\]

Finally, \eqref{eq:B-generic-evaluation} adds the no-middle-$C$ total,
twice the single-$a$ total, and the two-row total. This gives
\begin{align*}
12B_2={}&d^4-2d^3(2k-1)+d^2(6k^2-6k-1)\\
&-2d(2k^3-3k^2-37k+151)
 +4(k^4+5k^3+8k^2+4k+81n-81),
\end{align*}
which is \eqref{eq:B2}.

\subsection{\texorpdfstring{Evaluation of $B_3$: $(e_3,c_3)=(2,2)$}{Evaluation of B3: (e,c)=(2,2)}}\label{app:B3}
The target contribution of each weight is $[z^2w^2]P+[z^{-2}w^2]P$.
\Cref{tab:app-B3-data} records the no-middle, single-$a$, and two-row contributions.
For an example with middle $C$ only in row $a$, take $(h,p,s)=(1,0,1)$,
so $M=k-2$. Substitution in \eqref{eq:row-a} gives
\begin{align*}
P^a_{1,0,1}={}&wz^{-1}(z+z^{-1}+w)\Biggl\{
 \bigl[w(z^{-2}+2+z^2)+w^2(z^{-1}+z)\bigr]M\\*
&\qquad +(wz^2+w^2z)\left(dM-\binom{d-1}{2}\right)
       +w(M-d+2)\Biggr\},\\*
[w^2]P^a_{1,0,1}={}&M(z^2+3+3z^{-2}+z^{-4})+\left(dM-\binom{d-1}{2}\right)(z^2+1)
       +(M-d+2)(1+z^{-2}).
\end{align*}
Thus the two target coefficients and their sum are
\begin{align*}
[z^2w^2]P^a_{1,0,1}&=(d+1)M-\binom{d-1}{2},\\*
[z^{-2}w^2]P^a_{1,0,1}&=4M-d+2,\\*
[z^2w^2]P^a_{1,0,1}+[z^{-2}w^2]P^a_{1,0,1}
 &=(d+5)(k-2)-\binom{d-1}{2}-d+2.
\end{align*}
The other entries follow by the same substitution and extraction.

\begin{table}[!htbp]\centering\small
\setlength{\tabcolsep}{3pt}
\renewcommand{\arraystretch}{1.12}
\caption{Coefficient data for $B_3$.}\label{tab:app-B3-data}
\smallskip
\begin{minipage}[t]{0.36\linewidth}\centering\vspace{0pt}
\emph{No middle $C$}\par\smallskip
\begin{tabular}{@{}ccl@{}}\toprule
$(p,s)$ & Mult. & Coefficient\\\midrule
$(0,0)$ & 1 & $0$\\*
$(0,1)$ & 2 & $\frac{d^{2}+3d+2n+4}{2}$\\*
$(0,2)$ & 2 & $\frac{-7d+9n+9}{2}$\\*
$(1,1)$ & 1 & $2d+8$\\\bottomrule
\end{tabular}
\par\medskip
\emph{Middle $C$ in both rows}\par\smallskip
\begin{tabular}{@{}cl@{}}\toprule
$(h,p+s)$ & Contribution\\\midrule

$(2,0)$ & $2\binom{k-d}{2}$\\\bottomrule
\end{tabular}
\end{minipage}\hfill
\begin{minipage}[t]{0.62\linewidth}\centering\vspace{0pt}
\emph{Middle $C$ only in row $a$}\par\smallskip
\begin{tabular}{@{}ccl@{}}\toprule
$(h,p,s)$ & $M$ & Coefficient\\\midrule
$(1,0,0)$ & $k-1$ & $(d+1)M-\binom{d-1}{2}$\\*
$(1,0,1)$ & $k-2$ & $(d+5)M-\binom{d-1}{2}-d+2$\\*
$(1,1,0)$ & $k-2$ & $2M$\\*
$(2,0,0)$ & $k-1$ & $2\binom{M}{2}$\\\bottomrule
\end{tabular}
\end{minipage}
\end{table}

\smallskip\noindent\emph{No middle $C$.}
Using the listed multiplicities, the no-middle-$C$ entries in
\cref{tab:app-B3-data} sum to
\begin{equation}\label{eq:B3-none}
\sum_{\substack{p,s\ge0\\p+s\le2}}\bigl([z^2w^2]+[z^{-2}w^2]\bigr)P^0_{p,s}=d^2-2d+11n+21.
\end{equation}

\smallskip\noindent\emph{All middle $C$ in row $a$.}
For each entry in the single-$a$ block of \cref{tab:app-B3-data}, substitute its listed
value of $M$. Summing these entries gives
\begin{equation}\label{eq:B3-a}
\sum_{\substack{h\ge1,\ p,s\ge0\\h+p+s\le2}}\bigl([z^2w^2]+[z^{-2}w^2]\bigr)P^a_{h,p,s}=-d^2+2dk-d+k^2+5k-13.
\end{equation}
By \eqref{eq:row-a} and \eqref{eq:row-b}, the total for row $b$
is the same.

\smallskip\noindent\emph{Middle $C$ in both rows.}
The only case is $(h,p,s)=(2,0,0)$, whose entry in \cref{tab:app-B3-data} gives
\[
2\binom{k-d}{2}=(k-d)(k-d-1).
\]

Finally, \eqref{eq:B-generic-evaluation} adds the no-middle-$C$ total,
twice the single-$a$ total, and the two-row total. This gives
\begin{equation}\label{eq:B3-evaluation}
B_3=(2k-3)d+3k^2+9k+11n-5,
\end{equation}
which is \eqref{eq:B3}.

\subsection{\texorpdfstring{Evaluation of $B_4$: $(e_4,c_4)=(1,3)$}{Evaluation of B4: (e,c)=(1,3)}}\label{app:B4}
The target is $([z^1]+[z^{-1}])[w^3]$, with $h+p+s\le3$:
$h=0$ in the no-middle class, $1\le h\le3$ in a single-row class,
and $2\le h\le3$ in the two-row class.
For example, in the case with middle $C$ only in row $a$, take $(h,p,s)=(1,1,1)$,
set $M=k-3$ and substitute the boundary inputs in \eqref{eq:row-a}:
\begin{align*}
P^a_{1,1,1}={}&wz^{-1}\bigl[w(z^{-2}+1)+w^2z^{-1}\bigr]\Bigl\{ M\bigl[w(z^{-2}+2+z^2)+w^2(z^{-1}+z)\bigr]\\*
&\quad +(wz^2+w^2z)\left(dM-\frac{(d-1)(d-2)}2\right)
 +w(M-d+2)\Bigr\}.
\end{align*}
Taking $[w^3]$ gives
\begin{align*}
[w^3]P^a_{1,1,1}={}&M(z+3z^{-1}+3z^{-3}+z^{-5})\\*
&+\left(dM-\frac{(d-1)(d-2)}2\right)(z+z^{-1})
 +(M-d+2)(z^{-1}+z^{-3}).
\end{align*}
Hence the two target coefficients are
\begin{align*}
[zw^3]P^a_{1,1,1}&=(d+1)M-\frac{(d-1)(d-2)}2,\\*
[z^{-1}w^3]P^a_{1,1,1}&=(d+4)M-\frac{(d-1)(d-2)}2-d+2.
\end{align*}
Their sum at $M=k-3$ is $(2d+5)(k-3)-(d-1)(d-2)-d+2$.

The complete case totals are listed in \cref{tab:app-B4-data}; the two-row entries include
the position factors and the sums over ordered boundaries and $\ell$.

\begin{table}[!htbp]\centering\small
\setlength{\tabcolsep}{3pt}
\renewcommand{\arraystretch}{1.12}
\caption{Coefficient data for $B_4$.}\label{tab:app-B4-data}
\smallskip
\begin{minipage}[t]{0.36\linewidth}\centering\vspace{0pt}
\emph{No middle $C$}\par\smallskip
\begin{tabular}{@{}ccl@{}}\toprule
$(p,s)$ & Mult. & Coefficient\\\midrule
$(0,0)$ & 1 & $0$\\*
$(0,1)$ & 2 & $\frac{d^{2}+3d+2}{2}$\\*
$(0,2)$ & 2 & $-2d+4n+10$\\*
$(0,3)$ & 2 & $-8d+8n+8$\\*
$(1,1)$ & 1 & $d^{2}+5d+10$\\*
$(1,2)$ & 2 & $\frac{d^{2}+5d+28}{2}$\\\bottomrule
\end{tabular}
\par\medskip
\emph{Middle $C$ in both rows}\par\smallskip
\begin{tabular}{@{}cl@{}}\toprule
$(h,p+s)$ & Contribution\\\midrule
$(2,0)$ & $4\binom{k-d}{2}$\\*
$(2,1)$ & $6\binom{k-d-1}{2}$\\*
$(3,0)$ & $6\binom{k-d}{3}$\\\bottomrule
\end{tabular}
\end{minipage}\hfill
\begin{minipage}[t]{0.62\linewidth}\centering\vspace{0pt}
\emph{Middle $C$ only in row $a$}\par\smallskip
\begin{tabular}{@{}ccl@{}}\toprule
$(h,p,s)$ & $M$ & Coefficient\\\midrule
$(1,0,0)$ & $k-1$ & $dM-\binom{d-1}{2}$\\*
$(1,0,1)$ & $k-2$ & $(3d+7)M-3\binom{d-1}{2}-d+2$\\*
$(1,0,2)$ & $k-3$ & $(d+12)M-\binom{d-1}{2}-2d+4$\\*
$(1,1,0)$ & $k-2$ & $(d+1)M-\binom{d-1}{2}$\\*
$(1,1,1)$ & $k-3$ & $(2d+5)M-2\binom{d-1}{2}-d+2$\\*
$(1,2,0)$ & $k-3$ & $M$\\*
$(2,0,0)$ & $k-1$ & $2\binom{M}{2}+\binom{M+1}{3}-\binom{M-d+2}{3}$\\*
$(2,0,1)$ & $k-2$ & $6\binom{M}{2}+2\binom{M+1}{3}-2\binom{M-d+2}{3}+\binom{M-d+2}{2}$\\*
$(2,1,0)$ & $k-2$ & $\binom{M}{2}$\\*
$(3,0,0)$ & $k-1$ & $\binom{M}{3}$\\\bottomrule
\end{tabular}
\end{minipage}
\end{table}

\smallskip\noindent\emph{No middle $C$.}
Using the listed multiplicities, the no-middle-$C$ entries in
\cref{tab:app-B4-data} sum to
\[
3d^2-7d+24n+76.
\]

\smallskip\noindent\emph{All middle $C$ in row $a$.}
For each entry in the single-$a$ block of \cref{tab:app-B4-data}, substitute its listed
value of $M$. Summing these entries gives
\begin{align*}
&\frac{d^3}{2}-\frac{d^2(3k+5)}2
 +\frac{d(3k^2+10k-18)}2+\frac{k^3+15k^2+62k-300}{6}.
\end{align*}
By \eqref{eq:row-a} and \eqref{eq:row-b}, the total for row $b$
is the same.

\smallskip\noindent\emph{Middle $C$ in both rows.}
Adding the grouped two-row entries in \cref{tab:app-B4-data}, with their position factors
already included, gives
\[
(k-d)^3+2(k-d)^2+9d-9k+6.
\]

Finally, \eqref{eq:B-generic-evaluation} adds the no-middle-$C$ total,
twice the single-$a$ total, and the two-row total. This gives
\[
3B_4=6d(3k-8)+4k^3+21k^2+35k+72n-54,
\]
which is \eqref{eq:B4}.

\subsection{\texorpdfstring{Evaluation of $B_5$: $(e_5,c_5)=(0,2)$}{Evaluation of B5: (e,c)=(0,2)}}\label{app:B5}
The target is $2[z^0w^2]$, so the constant term is counted twice.
For an example with middle $C$ only in row $a$, take $(h,p,s)=(1,0,1)$.
Then $M=k-2$, $\sum_{\xi=0}^{d-1}v_1(\xi)=dM-\binom{d-1}{2}$,
and $v_1(0)=M-d+2$. Substitution in \eqref{eq:row-a} gives
\begin{align*}
P^a_{1,0,1}={}&wz^{-1}(z+z^{-1}+w)\Bigl\{
 \bigl[w(z^{-2}+2+z^2)+w^2(z^{-1}+z)\bigr]M\\*
&\qquad +(wz^2+w^2z)\left(dM-\binom{d-1}{2}\right)
       +w(M-d+2)\Bigr\}.
\end{align*}
Taking the $w$-term of the braces and the $w^0$-term of $z+z^{-1}+w$ yields
\begin{align*}
[w^2]P^a_{1,0,1}
={}&M(z^2+3+3z^{-2}+z^{-4})+\left(dM-\binom{d-1}{2}\right)(z^2+1)
       +(M-d+2)(1+z^{-2}),\\*
2[z^0w^2]P^a_{1,0,1}
={}&2\left(3M+dM-\binom{d-1}{2}+M-d+2\right)
=2(d+4)(k-2)-2\binom{d-1}{2}-2d+4.
\end{align*}
The remaining extractions are listed in \cref{tab:app-B5-data}. In its no-middle-$C$
part, \eqref{eq:row-none} gives equal contributions for $(p,s)$
and $(s,p)$. Thus the multiplicities are $1,2,2,1$ for
$(0,0),(0,1),(0,2),(1,1)$.
The four single-$a$ triples are ordered and each is used once;
the two-row entry includes its position factor.

\begin{table}[!htbp]\centering\small
\setlength{\tabcolsep}{3pt}
\renewcommand{\arraystretch}{1.12}
\caption{Coefficient data for $B_5$.}\label{tab:app-B5-data}
\smallskip
\begin{minipage}[t]{0.36\linewidth}\centering\vspace{0pt}
\emph{No middle $C$}\par\smallskip
\begin{tabular}{@{}ccl@{}}\toprule
$(p,s)$ & Mult. & Coefficient\\\midrule
$(0,0)$ & 1 & $d^{2}+3d-2$\\*
$(0,1)$ & 2 & $2d+2n+6$\\*
$(0,2)$ & 2 & $-7d+7n+7$\\*
$(1,1)$ & 1 & $d^{2}+3d+12$\\\bottomrule
\end{tabular}
\par\medskip
\emph{Middle $C$ in both rows}\par\smallskip
\begin{tabular}{@{}cl@{}}\toprule
$(h,p+s)$ & Contribution\\\midrule

$(2,0)$ & $4\binom{k-d}{2}$\\\bottomrule
\end{tabular}
\end{minipage}\hfill
\begin{minipage}[t]{0.62\linewidth}\centering\vspace{0pt}
\emph{Middle $C$ only in row $a$}\par\smallskip
\begin{tabular}{@{}ccl@{}}\toprule
$(h,p,s)$ & $M$ & Coefficient\\\midrule
$(1,0,0)$ & $k-1$ & $(2d+2)M-2\binom{d-1}{2}$\\*
$(1,0,1)$ & $k-2$ & $(2d+8)M-2\binom{d-1}{2}-2d+4$\\*
$(1,1,0)$ & $k-2$ & $2M$\\*
$(2,0,0)$ & $k-1$ & $2\binom{M}{2}$\\\bottomrule
\end{tabular}
\end{minipage}
\end{table}

\smallskip\noindent\emph{No middle $C$.}
Using the listed multiplicities, the no-middle-$C$ entries in
\cref{tab:app-B5-data} sum to
\[
\sum_{\substack{p,s\ge0\\p+s\le2}}2[z^0w^2]P^0_{p,s}=2d^2-4d+18n+36.
\]

\smallskip\noindent\emph{All middle $C$ in row $a$.}
For each entry in the single-$a$ block of \cref{tab:app-B5-data}, substitute its listed
value of $M$. Summing these entries gives
\[
\sum_{\substack{h\ge1,\ p,s\ge0\\h+p+s\le2}}2[z^0w^2]P^a_{h,p,s}=-2d^2+2d(2k-1)+k^2+9k-20.
\]
By \eqref{eq:row-a} and \eqref{eq:row-b}, the total for row $b$
is the same.

\smallskip\noindent\emph{Middle $C$ in both rows.}
The only case is $(h,p,s)=(2,0,0)$, whose entry in \cref{tab:app-B5-data} gives
\[
4\binom{k-d}{2}=2k^2-4kd+2d^2-2k+2d.
\]

Finally, \eqref{eq:B-generic-evaluation} adds the no-middle-$C$ total,
twice the single-$a$ total, and the two-row total. This gives
\[
B_5=2\bigl((2k-3)d+2k^2+8k+9n-2\bigr),
\]
which is \eqref{eq:B5}.

\subsection{\texorpdfstring{Evaluation of $B_6$: $(e_6,c_6)=(3,3)$}{Evaluation of B6: (e,c)=(3,3)}}\label{app:B6}
The target is $([z^3]+[z^{-3}])[w^3]$, with $h+p+s\le3$:
$h=0$ in the no-middle class, $1\le h\le3$ in a single-row class,
and $2\le h\le3$ in the two-row class.
For example, in the case with middle $C$ only in row $a$, take $(h,p,s)=(1,0,2)$,
set $M=k-3$ and substitute the boundary inputs in \eqref{eq:row-a}:
\begin{align*}
P^a_{1,0,2}={}&wz^{-1}(z+z^{-1}+w)\Bigl\{
 M\bigl[w^2(z^{-3}+3z^{-1}+3z+z^3) +w^3(z^{-2}+3+z^2)\bigr]\\*
&\quad +(w^2z^3+w^3z^2)\left(dM-\frac{(d-1)(d-2)}2\right)
 +w^2z(M-d+2)\Bigr\}.
\end{align*}
Taking $[w^3]$ gives
\begin{align*}
[w^3]P^a_{1,0,2}={}&M(z^3+4z+6z^{-1}+4z^{-3}+z^{-5})\\*
&+\left(dM-\frac{(d-1)(d-2)}2\right)(z^3+z)
 +(M-d+2)(z+z^{-1}).
\end{align*}
Hence the two target coefficients are
\[
[z^3w^3]P^a_{1,0,2}=(d+1)M-\frac{(d-1)(d-2)}2,
\qquad [z^{-3}w^3]P^a_{1,0,2}=4M.
\]
Their sum at $M=k-3$ is $(d+5)(k-3)-(d-1)(d-2)/2$.

The complete case totals are listed in \cref{tab:app-B6-data}; the two-row entries include
the position factors and the sums over ordered boundaries and $\ell$.

\begin{table}[!htbp]\centering\small
\setlength{\tabcolsep}{3pt}
\renewcommand{\arraystretch}{1.12}
\caption{Coefficient data for $B_6$.}\label{tab:app-B6-data}
\smallskip
\begin{minipage}[t]{0.36\linewidth}\centering\vspace{0pt}
\emph{No middle $C$}\par\smallskip
\begin{tabular}{@{}ccl@{}}\toprule
$(p,s)$ & Mult. & Coefficient\\\midrule
$(0,0)$ & 1 & $0$\\*
$(0,1)$ & 2 & $0$\\*
$(0,2)$ & 2 & $\frac{d^{2}+3d+2n+4}{2}$\\*
$(0,3)$ & 2 & $-4d+5n+5$\\*
$(1,1)$ & 1 & $4$\\*
$(1,2)$ & 2 & $d+8$\\\bottomrule
\end{tabular}
\par\medskip
\emph{Middle $C$ in both rows}\par\smallskip
\begin{tabular}{@{}cl@{}}\toprule
$(h,p+s)$ & Contribution\\\midrule
$(2,0)$ & $0$\\*
$(2,1)$ & $2\binom{k-d-1}{2}$\\*
$(3,0)$ & $2\binom{k-d}{3}$\\\bottomrule
\end{tabular}
\end{minipage}\hfill
\begin{minipage}[t]{0.62\linewidth}\centering\vspace{0pt}
\emph{Middle $C$ only in row $a$}\par\smallskip
\begin{tabular}{@{}ccl@{}}\toprule
$(h,p,s)$ & $M$ & Coefficient\\\midrule
$(1,0,0)$ & $k-1$ & $0$\\*
$(1,0,1)$ & $k-2$ & $2M$\\*
$(1,0,2)$ & $k-3$ & $(d+5)M-\binom{d-1}{2}$\\*
$(1,1,0)$ & $k-2$ & $(d+1)M-\binom{d-1}{2}$\\*
$(1,1,1)$ & $k-3$ & $4M-d+2$\\*
$(1,2,0)$ & $k-3$ & $2M$\\*
$(2,0,0)$ & $k-1$ & $2\binom{M}{2}+\binom{M+1}{3}-\binom{M-d+2}{3}$\\*
$(2,0,1)$ & $k-2$ & $3\binom{M}{2}+\binom{M-d+2}{2}$\\*
$(2,1,0)$ & $k-2$ & $2\binom{M}{2}$\\*
$(3,0,0)$ & $k-1$ & $2\binom{M}{3}$\\\bottomrule
\end{tabular}
\end{minipage}
\end{table}

\smallskip\noindent\emph{No middle $C$.}
Using the listed multiplicities, the no-middle-$C$ entries in
\cref{tab:app-B6-data} sum to
\[
d^2-3d+12n+34.
\]

\smallskip\noindent\emph{All middle $C$ in row $a$.}
For each entry in the single-$a$ block of \cref{tab:app-B6-data}, substitute its listed
value of $M$. Summing these entries gives
\begin{align*}
&\frac{d^3}{6}-\frac{d^2(k+1)}2
 +\frac{d(3k^2+6k-16)}6\\
&\qquad+\frac{2k^3+9k^2+13k-144}{6}.
\end{align*}
By \eqref{eq:row-a} and \eqref{eq:row-b}, the total for row $b$
is the same.

\smallskip\noindent\emph{Middle $C$ in both rows.}
Adding the grouped two-row entries in \cref{tab:app-B6-data}, with their position factors
already included, gives
\[
\frac{(k-d)^3}{3}+\frac{7d}{3}-\frac{7k}{3}+2.
\]

Finally, \eqref{eq:B-generic-evaluation} adds the no-middle-$C$ total,
twice the single-$a$ total, and the two-row total. This gives
\[
B_6=2d(k-3)+k^3+3k^2+2k+12n-12,
\]
which is \eqref{eq:B6}.

\section{Evaluation of the small-gap corrections}\label{app:small-gap-corrections}

We evaluate the coefficient extractions in Step~6.3 using
\eqref{eq:delta-Gamma}--\eqref{eq:delta-b-both}.
We keep the notation of that step: $\Delta_rU_p$ and $\Delta_rV_p$
are the actual boundary polynomials at gap $r$ minus their generic
formulas. Throughout, $M=k-1-p-s$, and a bar replaces $z$ by $z^{-1}$.

Substituting $H_p(r)=0,1,2,3,4$ gives the nonzero changes modulo $w^5$
listed in \cref{tab:profile-changes-one,tab:profile-changes-two-three}.
All omitted pairs, including $p=0,1$, have zero change.

\begin{table}[!htbp]
\centering\small
\caption{Changes in the boundary polynomials at $r=1$, modulo $w^5$.}\label{tab:profile-changes-one}
\smallskip
\begin{tabular}{@{}cccc@{}}
\toprule
$p$ & $2$ & $3$ & $4$\\
\midrule
$\Delta_1V_p$ & $w^2z^{-1}$ & $w^3(3+2z^{-2})+w^4z^{-1}$
 & $w^4(6z+7z^{-1}+3z^{-3})$\\
$\Delta_1U_p-\Delta_1V_p$ & $w^2z$ & $w^3(z^2+1)$ & $w^4(z^3+z+z^{-1})$\\
\bottomrule
\end{tabular}
\end{table}

\begin{table}[!htbp]
\centering\small
\caption{Changes in the boundary polynomials at $r=2,3$, modulo $w^5$.}\label{tab:profile-changes-two-three}
\smallskip
\begin{tabular}{@{}cccc@{}}
\toprule
$(r,p)$ & $(2,3)$ & $(2,4)$ & $(3,4)$\\
\midrule
$\Delta_rV_p$ & $w^3$ & $w^4(3z+2z^{-1})$ & $w^4z$\\
$\Delta_rU_p-\Delta_rV_p$ & $0$ & $0$ & $0$\\
\bottomrule
\end{tabular}
\end{table}

For example, when $H_p(r)=2$, \eqref{eq:delta-Gamma} reduces to
$-2w^pz^{p-1}+2\Psi_{p,1}+\Psi_{p,2}
-2\Phi_{p,1}-\Phi_{p,2}=w^pz^{p-3}$.
Terms of $w$-degree at least five cannot affect any required coefficient.

We compute the required coefficients of the weight differences using
$N_p+R_p+D_p=w^pz^{-p}(z+z^{-1}+w)$ and
$N_p=\ind{p=0}(z+z^{-1})$.
\Cref{tab:normalized-corrections} records the sums of the two required coefficients over
all allowed indices in each middle case, before multiplication by $n-r$.
By \eqref{eq:delta-b-both}, the correction for middle $C$'s in row
$b$ equals that for row $a$ after summing over $p,s$ and adding the
two required coefficients. The correction for middle $C$'s in both
rows is zero.

\begin{table}[!htbp]
\centering\small
\caption{Coefficient corrections by gap and middle case, before multiplication by $n-r$.}\label{tab:normalized-corrections}
\smallskip
\begin{tabular}{@{}crrrrr@{}}
\toprule
& \multicolumn{2}{c}{$r=1$} & \multicolumn{2}{c}{$r=2$} & $r=3$\\
$j$ & No middle $C$ & Row $a$ & No middle $C$ & Row $a$ & No middle $C$\\
\midrule
$1$ & $28n+56$ & $k^2+7k-50$ & $10n-2$ & $2k-8$ & $2n-4$\\
$2$ & $17n+32$ & $(k^2+5k)/2-26$ & $5n-3$ & $k-4$ & $n-2$\\
$3$ & $n+2$ & $-1$ & $0$ & $0$ & $0$\\
$4$ & $8n+16$ & $2k-11$ & $2n-2$ & $0$ & $0$\\
$5$ & $2n+4$ & $-2$ & $0$ & $0$ & $0$\\
$6$ & $2n+4$ & $-2$ & $0$ & $0$ & $0$\\
\bottomrule
\end{tabular}
\end{table}

All single-row corrections at gap three and all corrections at gap four are zero.
Only a few choices of the indices contribute to these corrections.
At $r=1$, the correction for no middle $C$ uses only the unordered pairs
$(0,2)$, $(0,3)$, $(0,4)$, $(1,2)$, $(1,3)$, $(2,2)$; reflected pairs are counted
separately and $(2,2)$ once. For example, for $j=1$ their contributions
are respectively $0,n+8,13n+4,2,12,4$, giving
$2[(n+8)+(13n+4)+2+12]+4=28n+56$.
For $j=1$, the nonzero contributions with all middle $C$'s in row $a$, at
$(h,p,s)=(1,0,2),(1,0,3),(1,1,2),(2,0,2)$, are
$2k-8,12k-50,2k-10,k^2-9k+18$, whose sum is $k^2+7k-50$.
Thus the entries for $j=1$ and $r=1$ in \cref{tab:normalized-corrections} give
\[
\varepsilon_{1,1}=(n-1)\bigl[(28n+56)+2(k^2+7k-50)\bigr]
=2(n-1)(k^2+7k+14n-22).
\]
For $j=3$ at gap one, only $(p,s)=(0,2),(2,0)$ contribute in the
case with no middle $C$, each by $(n+2)/2$. In the case with all
middle $C$'s in row $a$, only $(h,p,s)=(1,0,1)$ contributes, since
$\Delta_1P^a_{1,0,1}=-w^2(1+z^{-2})-w^3z^{-1}$.
This gives $\varepsilon_{3,1}=(n-1)[(n+2)-1-1]=n(n-1)$.

At $r=2$, only the polynomials for boundary lengths $3,4$ change.
In the case with no middle $C$, the unordered pairs $(0,3),(0,4),(1,3)$
give the respective vectors
$(2,0,0,n-1,0,0)$,
$(5(n-1),5(n-1)/2,0,0,0,0)$, and $(2,1,0,0,0,0)$,
where the entries are the contributions for $j=1,\ldots,6$ in order.
Each pair has a distinct reflected pair. For middle $C$'s in row $a$,
the only weight difference that contributes to a required coefficient is
$\Delta_2P^a_{1,0,3}\equiv(k-4)w^4(1+z^{-2})\pmod{w^5}$,
which gives $2(k-4),k-4$ for $j=1,2$, respectively.
At $r=3$, only $(0,4),(4,0)$ remain, with
$\Delta_3P^0_{0,4}\equiv\frac{n-2}{2}w^4(1+z^{-2})\pmod{w^5}$
and its reflection. Together they give $2(n-2),n-2$ for $j=1,2$,
respectively, and zero for the other four values of $j$.
A middle $C$ would raise the $w$-degree above four.
The remaining entries for gap one follow from the same coefficient
extractions. For each gap, add the entry for no middle $C$ and twice
the entry for middle $C$'s in row $a$, then multiply by $n-r$.
The result agrees with
\eqref{eq:correction-definition} and gives all entries of
\cref{tab:corrections}.

\setlength{\bibsep}{2pt}

\section*{Declarations}

\noindent\textbf{Use of artificial intelligence.}
The author used ChatGPT (OpenAI) to assist with algebraic
simplifications, mathematical and computational checks,
\LaTeX{} typesetting, and English-language editing.
The author assumes responsibility for all content of this article.

\end{document}